\documentclass[11pt,fleqn]{article}
\usepackage{amsfonts}
\usepackage{amssymb}
\usepackage{amsthm}

\usepackage[leqno]{amsmath}
\usepackage{mathtools}

\makeatletter
\newcommand{\leqnomode}{\tagsleft@true}
\newcommand{\reqnomode}{\tagsleft@false}
\makeatother

\usepackage[UKenglish]{babel}
\usepackage{enumerate}
\usepackage{hyperref}
\usepackage{latexsym}
\usepackage{lmodern}
\theoremstyle{plain}
\newtheorem{theorem}{Theorem}[section]
\newtheorem{conjecture}[theorem]{Conjecture}
\newtheorem{lemma}[theorem]{Lemma}
\newtheorem{proposition}[theorem]{Proposition}

\newtheorem{observation}[theorem]{Observation}

\theoremstyle{remark}

\usepackage{tikz}
\usetikzlibrary{arrows,snakes,backgrounds}
\usetikzlibrary{decorations.pathmorphing}
\usepackage{graphicx}
\usetikzlibrary{positioning}
\usepackage{enumitem}

\providecommand{\customgenericname}{}
\newcommand{\newcustomtheorem}[2]{%
  \newenvironment{#1}[1]
  {%
   \renewcommand\customgenericname{#2}%
   \renewcommand\theinnercustomgeneric{##1}%
   \innercustomgeneric
  }
  {\endinnercustomgeneric}
}

\newcustomtheorem{customtheorem}{Theorem}
\newcustomtheorem{customlemma}{Lemma}

\clubpenalty = 10000
\widowpenalty = 10000
\displaywidowpenalty = 10000

\def\d{\hbox{-}}

\newcommand{\sset}[1]{\left\{#1\right\}}

\def\stackbelow#1#2{\underset{\displaystyle\overset{\displaystyle\shortparallel}{#2}}{#1}}

\usepackage[margin=2.25cm]{geometry}
\usepackage{marvosym}
\renewcommand{\tilde}[1]{\widetilde{#1}}

\def\longbox#1{\parbox{0.85\textwidth}{#1}}

\def\dd{\hbox{-}}

\newcommand*\samethanks[1][\value{footnote}]{\footnotemark[#1]}

\title{New Examples of Minimal Non-Strongly-Perfect Graphs}
\author{Maria Chudnovsky \thanks{Supported by NSF Grant DMS-1763817. This material is based upon work supported by, or in part by, the U.S. Army Research Laboratory and the U. S. Army Research Office under grant number W911NF-16-1-0404.}
\\
Princeton University, Princeton, NJ 08544
\\
\\
Cemil Dibek \samethanks
\\
Princeton University, Princeton, NJ 08544
\\
\\
Paul Seymour\thanks{Supported by AFOSR grant A9550-19-1-0187 and NSF grant DMS-1800053.}
\\
Princeton University, Princeton, NJ 08544}
\date{\today}
\begin{document}\maketitle

\vspace{-0.2cm}

\begin{abstract}
A graph is \emph{strongly perfect} if every induced subgraph $H$ has a stable set that meets every nonempty maximal clique of $H$. The characterization of strongly perfect graphs by a set of forbidden induced subgraphs is not known. Here we provide several new minimal non-strongly-perfect graphs. 
\end{abstract}

\section{Introduction} \label{sec:intro}

All graphs in this paper are finite and simple.  Let $G = (V, E)$ be a graph. For $X \subseteq V(G)$, $G[X]$ denotes the induced subgraph of $G$ with vertex set $X$. We say that $G$ \emph{contains} a graph $H$ if $G$ has an induced subgraph isomorphic to $H$. For a vertex $v \in V(G)$, we let $N_G(v) = N(v)$ denote the set of neighbors of $v$ in $G$. Two disjoint sets $X, Y \subseteq V(G)$ are \emph{complete} to each other if every vertex in $X$ is adjacent to every vertex in $Y$, and \emph{anticomplete} to each other if no vertex in $X$ is adjacent to a vertex in $Y$. We say that $v$ is \emph{complete} (\emph{anticomplete}) to $X \subseteq V(G)$ if $\sset{v}$ is complete (anticomplete) to $X$. 

A \emph{clique} in $G$ is a set of pairwise adjacent vertices, and a \emph{stable set} is a set of pairwise non-adjacent vertices. A \emph{maximal clique} is a clique that is not a subset of a larger clique. A stable set in $G$ is called a \emph{strong stable set} if it meets every nonempty maximal clique of $G$. A \emph{clique cutset} of a graph $G$ is a clique $K$ such that $G \setminus K$ is not connected. A vertex $v \in V(G)$ is a \emph{simplicial vertex} if $N(v)$ is a clique.

A {\em path} in $G$ is an induced subgraph isomorphic to a graph $P$ with $k+1$ vertices $p_0, p_1, \dots, p_k$ and with $E(P) = \{p_ip_{i+1} : i \in \{0,\dots, k-1\}\}$. We write $P = p_0 \dd p_1 \dd \dots \dd p_k$ to denote a path with vertices $p_0, p_1, \dots, p_k$ in order. The \emph{length} of a path is the number of edges in it. A path is {\em odd} if its length is odd, and {\em even} otherwise. For an integer $k \geq 4$, a {\em hole of length $k$} in $G$ is an induced subgraph isomorphic to the $k$-vertex cycle $C_k$, and an {\em antihole of length $k$} is an induced subgraph isomorphic to $\overline{C_k}$. A hole (or antihole) is {\em odd} if its length is odd, and {\em even} if its length is even. A \emph{claw} consists of four vertices, say $a, b, c, d$, with edges $ab$, $ac$, $ad$. A graph is \emph{claw-free} if it contains no induced claw.

A graph is \emph{strongly perfect} if every induced subgraph has a strong stable set. Strongly perfect graphs form a subclass of perfect graphs and have been studied by several authors (\cite{BD, Preissmann, Ravindra, Ravindra3}). A graph $G$ is \emph{minimal non-strongly-perfect} if $G$ is not strongly perfect but every proper induced subgraph of $G$ is. Some results concerning the structure of minimal non-strongly-perfect graphs have been presented in (\cite{AnasOlaru, HMP, Olariu}). In \cite{Ravindra2}, a characterization of claw-free strongly perfect graphs by five infinite families of forbidden induced subgraphs was conjectured, and this was proved by Wang \cite{Wang} in 2006. Recently, a new shorter proof of this characterization was given in \cite{CF-SP}. Nevertheless, the characterization of strongly perfect graphs in general remains open. A conjecture in this direction was presented in 1990.

\begin{conjecture}[\cite{Ravindra2}]
A graph is strongly perfect if and only if it contains no odd holes, no antiholes of length at least six, and none of the Graphs I, II, III, IV, V shown in Figure \ref{fig:minimal_non_strongly_perfect}.
\label{conj:original_conj}
\end{conjecture}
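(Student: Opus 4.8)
The plan is to prove the two directions of the biconditional separately. The necessity direction (strongly perfect $\Rightarrow$ none of the obstructions) is the routine one: since strong perfection is preserved under taking induced subgraphs, it suffices to check that each named graph fails to be strongly perfect. For an odd hole $C_{2k+1}$ the maximal cliques are exactly its edges, and any stable set has size at most $k$, hence meets at most $2k < 2k+1$ edges, so no strong stable set exists. For an antihole $\overline{C_n}$ with $n \geq 6$, the stable sets correspond to cliques of $C_n$ and so have size at most two, whereas the maximal cliques of $\overline{C_n}$ correspond to the maximal stable sets of $C_n$, of which there are too many to be met by two vertices; a short case check (the case $n=6$ is the triangular prism, whose five maximal cliques cannot be met by a stable set of size two) settles this. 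Graphs I--V are finite, so each is handled by inspecting its maximal cliques and stable sets directly.

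The substance is the converse. By contraposition I would take a graph $G$ that is not strongly perfect and is minimal with this property, and show that $G$ is itself an odd hole, an antihole of length at least six, or one of I--V; since $G$ sits inside the original non-strongly-perfect graph, this exhibits the required obstruction. As strongly perfect graphs are perfect, the first step is to dispose of the imperfect case via the Strong Perfect Graph Theorem: a minimal imperfect graph is an odd hole or an odd antihole, each of which is non-strongly-perfect, so an imperfect $G$ must equal an odd hole or an odd antihole. An odd antihole of length five is $C_5$, an odd hole, while odd antiholes of length at least seven are antiholes of length at least six, so in every imperfect case $G$ already lies on the list.

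This reduces the whole problem to showing that a \emph{perfect} minimal non-strongly-perfect graph $G$ is an even antihole of length at least six or one of I--V. Here I would run a decomposition argument. First, $G$ has no clique cutset: every clique of $G$ lies in a single block of the decomposition, so a standard gluing argument that coordinates strong stable sets of the (strongly perfect) blocks across the clique $K$ would produce a strong stable set of $G$, contradicting minimality. Similarly I would rule out simplicial vertices, and more generally exploit that, for each vertex $v$, the strongly perfect graph $G \setminus v$ has a strong stable set $S_v$; the fact that no $S_v$ extends to all of $G$ pins down the maximal cliques through $v$ and the local neighborhood structure. Combining these local constraints with perfection (no odd hole and no odd antihole) and the absence of clique cutsets, one would try to force the global structure to collapse onto the finite list I--V or onto the even-antihole family.

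The main obstacle is exactly this last step. The necessity direction and the imperfect case are essentially bookkeeping, but controlling the perfect minimal obstructions is the crux: one must show that the local ``failure to extend'' data, together with the absence of clique cutsets and of odd holes and odd antiholes, admits no configuration beyond the listed ones. Since perfect graphs without clique cutsets can still be structurally rich, the real danger is that some sporadic perfect minimal non-strongly-perfect graph survives the decomposition; any such survivor would be a counterexample rather than a step in the proof, so I expect the argument to stand or fall on an exhaustive structural analysis of clique-cutset-free perfect graphs around a vertex whose deletion admits a strong stable set that cannot be extended.
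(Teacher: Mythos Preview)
The statement you are trying to prove is labelled \emph{Conjecture} in the paper, and the paper does not prove it; on the contrary, the entire purpose of the paper is to exhibit new minimal non-strongly-perfect graphs beyond the list in the conjecture. Already Graph~VI in Figure~\ref{fig:minimal_non_strongly_perfect} (due to Maffray) is a perfect, clique-cutset-free, minimal non-strongly-perfect graph containing no odd hole, no antihole of length at least six, and none of Graphs~I--V, so the ``if'' direction of the conjecture is false. The graphs in Figures~\ref{fig:new_mnsp_graphs}, \ref{fig:new_MNSP_evolve}, and \ref{fig:new_mnsp_chord} provide further infinite families of counterexamples.

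Your outline for the necessity direction and the imperfect case is fine, but the structural analysis you propose for the perfect case cannot terminate in the list I--V plus even antiholes: the very ``sporadic perfect minimal non-strongly-perfect graph'' you flag as the main danger in your last paragraph is not hypothetical --- it exists, in many non-isomorphic forms, and that is precisely what this paper documents. So the gap is not a missing lemma or a repairable step; the target statement is simply not true, and any attempted proof of the converse must fail at the point where you try to force the structure onto the conjectured list.
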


\noindent (Although we refer to them as ``graphs" for convenience, they are actually infinite families of graphs. Also, in all the figures throughout the paper, ``odd" refers to the length of the specified paths, and ``even" refers to the length of the specified holes.)

\vspace{-0.1cm}

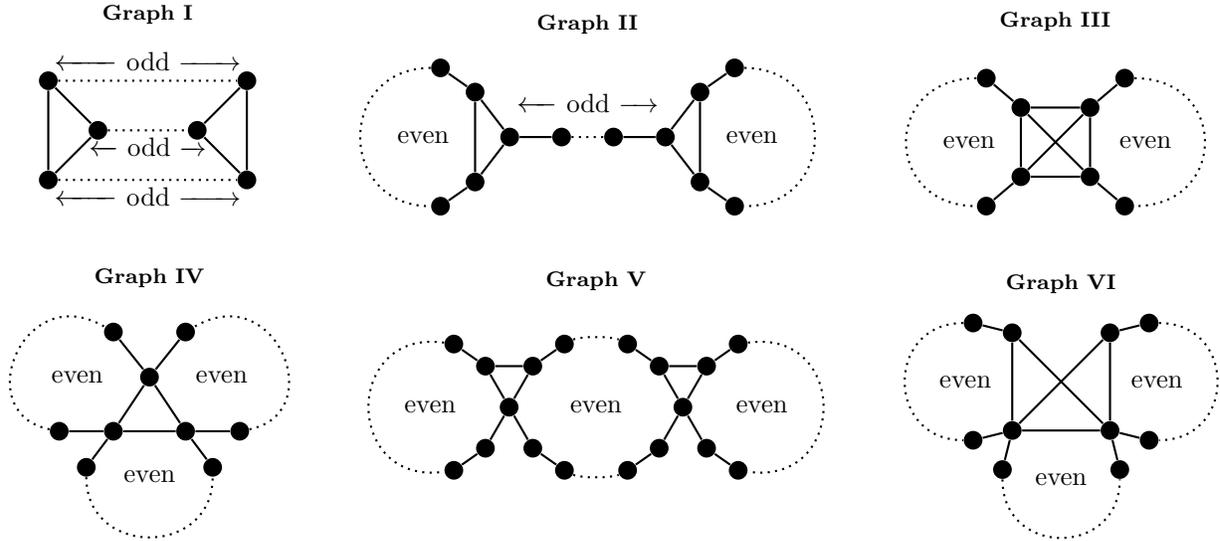
\begin{figure}[h]
\begin{center}
\hspace{0.7cm}
\begin{tikzpicture}[scale=0.22]

\node[inner sep=2.5pt, fill=black, circle] at (0, 0)(v1){}; 
\node[inner sep=2.5pt, fill=black, circle] at (12, 0)(v2){};
\node[inner sep=2.5pt, fill=black, circle] at (3, -3)(v3){}; 
\node[inner sep=2.5pt, fill=black, circle] at (9, -3)(v4){};
\node[inner sep=2.5pt, fill=black, circle] at (0, -6)(v5){}; 
\node[inner sep=2.5pt, fill=black, circle] at (12, -6)(v6){};

\draw[black, dotted, thick] (v1) -- (v2);
\draw[black, dotted, thick] (v3) -- (v4);
\draw[black, dotted, thick] (v5) -- (v6);

\draw[black, thick] (v1) -- (v3);
\draw[black, thick] (v1) -- (v5);
\draw[black, thick] (v3) -- (v5);

\draw[black, thick] (v2) -- (v4);
\draw[black, thick] (v2) -- (v6);
\draw[black, thick] (v4) -- (v6);

\node at (6,1.2) {$\xleftarrow{\makebox[0.6cm]{}}$ \small{odd} $\xrightarrow{\makebox[0.6cm]{}}$};
\node at (6,-4) {$\xleftarrow{\makebox[0.2mm]{}}$ \small{odd} $\xrightarrow{\makebox[0.2mm]{}}$};
\node at (6,-7) {$\xleftarrow{\makebox[0.6cm]{}}$ \small{odd} $\xrightarrow{\makebox[0.6cm]{}}$};

\node at (6,4) {\scriptsize{\textbf{Graph I}}};

\end{tikzpicture}
\hspace{0.8cm}
\begin{tikzpicture}[scale=0.23]

\node[inner sep=2.5pt, fill=black, circle] at (-4, 4)(v1){}; 
\node[inner sep=2.5pt, fill=black, circle] at (-4, -4)(v2){}; 
\node[inner sep=2.5pt, fill=black, circle] at (-2, 2.6)(v3){}; 
\node[inner sep=2.5pt, fill=black, circle] at (-2, -2.6)(v4){};

\node[inner sep=2.5pt, fill=black, circle] at (0, 0)(v5){}; 
\node[inner sep=2.5pt, fill=black, circle] at (3, 0)(v6){}; 
\node[inner sep=2.5pt, fill=black, circle] at (6, 0)(v7){}; 
\node[inner sep=2.5pt, fill=black, circle] at (9, 0)(v8){};

\node[inner sep=2.5pt, fill=black, circle] at (11, 2.6)(v9){}; 
\node[inner sep=2.5pt, fill=black, circle] at (11, -2.6)(v10){}; 
\node[inner sep=2.5pt, fill=black, circle] at (13, 4)(v11){}; 
\node[inner sep=2.5pt, fill=black, circle] at (13, -4)(v12){}; 

\draw[black, dotted, thick] (v1)  .. controls +(-6,0) and +(-6,0) .. (v2);
\draw[black, thick] (v1) -- (v3);
\draw[black, thick] (v2) -- (v4);
\draw[black, thick] (v3) -- (v4);
\draw[black, thick] (v3) -- (v5);
\draw[black, thick] (v4) -- (v5);
\draw[black, thick] (v5) -- (v6);
\draw[black, dotted, thick] (v6) -- (v7);
\draw[black, thick] (v7) -- (v8);
\draw[black, thick] (v8) -- (v9);
\draw[black, thick] (v8) -- (v10);
\draw[black, thick] (v9) -- (v10);
\draw[black, thick] (v9) -- (v11);
\draw[black, thick] (v10) -- (v12);
\draw[black, dotted, thick] (v11)  .. controls +(6,0) and +(6,0) .. (v12);

\node at (-5,0) {\small{even}};
\node at (4.5,2) {$\xleftarrow{\makebox[0.3cm]{}}$ \small{odd} $\xrightarrow{\makebox[0.3cm]{}}$};
\node at (14,0) {\small{even}};

\node at (4.5,6.5) {\scriptsize{\textbf{Graph II}}};

\end{tikzpicture}
\hspace{0.3cm}
\begin{tikzpicture}[scale=0.23]

\node[inner sep=2.5pt, fill=black, circle] at (-4, 3.7)(v1){}; 
\node[inner sep=2.5pt, fill=black, circle] at (-4, -3.7)(v2){}; 
\node[inner sep=2.5pt, fill=black, circle] at (-2, 2)(v3){}; 
\node[inner sep=2.5pt, fill=black, circle] at (-2, -2)(v4){};

\node[inner sep=2.5pt, fill=black, circle] at (2, 2)(v9){}; 
\node[inner sep=2.5pt, fill=black, circle] at (2, -2)(v10){}; 
\node[inner sep=2.5pt, fill=black, circle] at (4, 3.7)(v11){}; 
\node[inner sep=2.5pt, fill=black, circle] at (4, -3.7)(v12){}; 

\draw[black, dotted, thick] (v1)  .. controls +(-6,0) and +(-6,0) .. (v2);
\draw[black, thick] (v1) -- (v3);
\draw[black, thick] (v2) -- (v4);
\draw[black, thick] (v3) -- (v4);
\draw[black, thick] (v3) -- (v9);
\draw[black, thick] (v3) -- (v10);
\draw[black, thick] (v4) -- (v9);
\draw[black, thick] (v4) -- (v10);
\draw[black, thick] (v9) -- (v10);
\draw[black, thick] (v9) -- (v11);
\draw[black, thick] (v10) -- (v12);
\draw[black, dotted, thick] (v11)  .. controls +(6,0) and +(6,0) .. (v12);

\node at (-5,0) {\small{even}};
\node at (5.2,0) {\small{even}};

\node at (0,7) {\scriptsize{\textbf{Graph III}}};

\end{tikzpicture}
\begin{tikzpicture}[scale=0.24]

\node[inner sep=2.5pt, fill=black, circle] at (0.5, 0)(v1){}; 
\node[inner sep=2.5pt, fill=black, circle] at (7.5, 0)(v2){}; 
\node[inner sep=2.5pt, fill=black, circle] at (2, 2)(v3){}; 
\node[inner sep=2.5pt, fill=black, circle] at (6, 2)(v4){};
\node[inner sep=2.5pt, fill=black, circle] at (4, 5)(v5){}; 
\node[inner sep=2.5pt, fill=black, circle] at (-1, 2)(v6){}; 
\node[inner sep=2.5pt, fill=black, circle] at (9, 2)(v7){}; 
\node[inner sep=2.5pt, fill=black, circle] at (2, 7.5)(v8){};
\node[inner sep=2.5pt, fill=black, circle] at (6, 7.5)(v9){}; 

\node[inner sep=2.5pt, fill=white, circle] at (4, 13.3)(v21){}; 
\node[inner sep=2.5pt, fill=white, circle] at (16, 13.3)(v22){}; 

\draw[black, dotted, thick] (v1)  .. controls +(0,-5) and +(0,-5) .. (v2);
\draw[black, dotted, thick] (v6)  .. controls +(-5,1) and +(-5,3) .. (v8);
\draw[black, dotted, thick] (v7)  .. controls +(5,1) and +(5,3) .. (v9);
\draw[black, thick] (v1) -- (v3);
\draw[black, thick] (v2) -- (v4);
\draw[black, thick] (v3) -- (v4);
\draw[black, thick] (v3) -- (v5);
\draw[black, thick] (v4) -- (v5);
\draw[black, thick] (v6) -- (v3);
\draw[black, thick] (v7) -- (v4);
\draw[black, thick] (v8) -- (v5);
\draw[black, thick] (v9) -- (v5);

\node at (4,-0.5) {\small{even}};
\node at (8,5) {\small{even}};
\node at (0,5) {\small{even}};

\node at (4,10.5) {\scriptsize{\textbf{Graph IV}}};

\end{tikzpicture}
\hspace{-0.7cm}
\begin{tikzpicture}[scale=0.21]

\node[inner sep=2.5pt, fill=black, circle] at (-4, 4)(v1){}; 
\node[inner sep=2.5pt, fill=black, circle] at (-4, -4)(v2){}; 
\node[inner sep=2.5pt, fill=black, circle] at (-2, 2.6)(v3){}; 
\node[inner sep=2.5pt, fill=black, circle] at (-2, -2.6)(v4){};

\node[inner sep=2.5pt, fill=black, circle] at (-0.5, 0)(v5){};

\node[inner sep=2.5pt, fill=black, circle] at (3, 4)(v13){}; 
\node[inner sep=2.5pt, fill=black, circle] at (3, -4)(v14){}; 
\node[inner sep=2.5pt, fill=black, circle] at (1, 2.6)(v15){}; 
\node[inner sep=2.5pt, fill=black, circle] at (1, -2.6)(v16){};

\node[inner sep=2.5pt, fill=black, circle] at (12, 2.6)(v9){}; 
\node[inner sep=2.5pt, fill=black, circle] at (12, -2.6)(v10){}; 
\node[inner sep=2.5pt, fill=black, circle] at (14, 4)(v11){}; 
\node[inner sep=2.5pt, fill=black, circle] at (14, -4)(v12){}; 

\node[inner sep=2.5pt, fill=black, circle] at (10.5, 0)(v8){};

\node[inner sep=2.5pt, fill=black, circle] at (9, 2.6)(v17){}; 
\node[inner sep=2.5pt, fill=black, circle] at (9, -2.6)(v18){}; 
\node[inner sep=2.5pt, fill=black, circle] at (7, 4)(v19){}; 
\node[inner sep=2.5pt, fill=black, circle] at (7, -4)(v20){}; 

\node[inner sep=2.5pt, fill=white, circle] at (7, -9)(v21){}; 

\draw[black, dotted, thick] (v1)  .. controls +(-7,1) and +(-7,-1) .. (v2);
\draw[black, thick] (v1) -- (v3);
\draw[black, thick] (v2) -- (v4);
\draw[black, thick] (v3) -- (v5);
\draw[black, thick] (v4) -- (v5);
\draw[black, thick] (v8) -- (v9);
\draw[black, thick] (v8) -- (v10);
\draw[black, thick] (v9) -- (v11);
\draw[black, thick] (v10) -- (v12);
\draw[black, dotted, thick] (v11)  .. controls +(7,1) and +(7,-1) .. (v12);

\draw[black, thick] (v15) -- (v5);
\draw[black, thick] (v16) -- (v5);
\draw[black, thick] (v13) -- (v15);
\draw[black, thick] (v14) -- (v16);

\draw[black, thick] (v17) -- (v8);
\draw[black, thick] (v18) -- (v8);
\draw[black, thick] (v17) -- (v19);
\draw[black, thick] (v18) -- (v20);

\draw[black, dotted, thick] (v13)  .. controls +(1,0.5) and +(-1,0.5) .. (v19);
\draw[black, dotted, thick] (v14)  .. controls +(1,-0.5) and +(-1,-0.5) .. (v20);

\draw[black, thick] (v3) -- (v15);
\draw[black, thick] (v17) -- (v9);

\node at (-5.5,0) {\small{even}};
\node at (5,0) {\small{even}};
\node at (15.5,0) {\small{even}};

\node at (5,8) {\scriptsize{\textbf{Graph V}}};

\end{tikzpicture}
\hspace{0.2cm}
\begin{tikzpicture}[scale=0.26]

\node[inner sep=2.5pt, fill=black, circle] at (-4, 2.5)(v1){}; 
\node[inner sep=2.5pt, fill=black, circle] at (-4, -3.5)(v2){}; 
\node[inner sep=2.5pt, fill=black, circle] at (-2, 2)(v3){}; 
\node[inner sep=2.5pt, fill=black, circle] at (-2, -3)(v4){};

\node[inner sep=2.5pt, fill=black, circle] at (3, 2)(v9){}; 
\node[inner sep=2.5pt, fill=black, circle] at (3, -3)(v10){}; 
\node[inner sep=2.5pt, fill=black, circle] at (5, 2.5)(v11){}; 
\node[inner sep=2.5pt, fill=black, circle] at (5, -3.5)(v12){}; 

\node[inner sep=2.5pt, fill=black, circle] at (-2.5, -5)(v13){}; 
\node[inner sep=2.5pt, fill=black, circle] at (3.5, -5)(v14){}; 

\draw[black, dotted, thick] (v1)  .. controls +(-4.5,0) and +(-4.5,0) .. (v2);
\draw[black, thick] (v1) -- (v3);
\draw[black, thick] (v2) -- (v4);
\draw[black, thick] (v3) -- (v4);
\draw[black, thick] (v3) -- (v10);
\draw[black, thick] (v4) -- (v9);
\draw[black, thick] (v4) -- (v10);
\draw[black, thick] (v9) -- (v10);
\draw[black, thick] (v9) -- (v11);
\draw[black, thick] (v10) -- (v12);
\draw[black, thick] (v4) -- (v13);
\draw[black, thick] (v10) -- (v14);
\draw[black, dotted, thick] (v11)  .. controls +(4.5,0) and +(4.5,0) .. (v12);

\draw[black, dotted, thick] (v13)  .. controls +(0,-4.5) and + (0,-4.5) .. (v14);

\node at (-4.4,-0.5) {\small{even}};
\node at (5.4,-0.5) {\small{even}};
\node at (0.5, -5.5) {\small{even}};

\node at (0.5,4.5) {\scriptsize{\textbf{Graph VI}}};

\end{tikzpicture}
\end{center}
\vspace{-0.7cm}
\caption{Some forbidden induced subgraphs for strongly perfect graphs}
\label{fig:minimal_non_strongly_perfect}
\end{figure}

Later, in 1999, another minimal non-strongly-perfect graph, proposed by Maffray, appeared in a paper of Ravindra \cite{Ravindra3} (see Graph VI in Figure \ref{fig:minimal_non_strongly_perfect}). To the best of our knowledge, this is a complete list of minimal non-strongly-perfect graphs that have appeared in the literature. Here, we extend the list by providing several new infinite families of minimal non-strongly-perfect graphs.


\section{Preliminaries} \label{sec:preliminaries}

We start with a remark about one of the graphs listed in Conjecture \ref{conj:original_conj}. Let $G$ be Graph V in Figure \ref{fig:minimal_non_strongly_perfect}. For $i=1,2$, let $P_i$ be the $u_iv_i$-path in $G$, as shown in Figure \ref{fig:minimal_fifth} (left). It can be checked that $G$ is not minimal non-strongly-perfect unless $P_1$ is odd and $P_2$ is of length one, that is, $u_2$ is adjacent to $v_2$. Therefore, from now on, when we say Graph V, we refer to the graph shown in Figure \ref{fig:minimal_fifth} (right).

\vspace{-0.15cm}

\begin{figure}[h]
\begin{center}
\begin{tikzpicture}[scale=0.22]

\node[inner sep=2.5pt, fill=black, circle] at (-4, 4)(v1){}; 
\node[inner sep=2.5pt, fill=black, circle] at (-4, -4)(v2){}; 
\node[inner sep=2.5pt, fill=black, circle] at (-2, 2.6)(v3){}; 
\node[inner sep=2.5pt, fill=black, circle] at (-2, -2.6)(v4){};

\node[inner sep=2.5pt, fill=black, circle] at (-0.5, 0)(v5){};

\node[inner sep=2.5pt, fill=black, circle] at (3, 4)(v13){}; 
\node[inner sep=2.5pt, fill=black, circle] at (3, -4)(v14){}; 
\node[inner sep=2.5pt, fill=black, circle] at (1, 2.6)(v15){}; 
\node[inner sep=2.5pt, fill=black, circle] at (1, -2.6)(v16){};

\node[inner sep=2.5pt, fill=black, circle] at (12, 2.6)(v9){}; 
\node[inner sep=2.5pt, fill=black, circle] at (12, -2.6)(v10){}; 
\node[inner sep=2.5pt, fill=black, circle] at (14, 4)(v11){}; 
\node[inner sep=2.5pt, fill=black, circle] at (14, -4)(v12){}; 

\node[inner sep=2.5pt, fill=black, circle] at (10.5, 0)(v8){};

\node[inner sep=2.5pt, fill=black, circle] at (9, 2.6)(v17){}; 
\node[inner sep=2.5pt, fill=black, circle] at (9, -2.6)(v18){}; 
\node[inner sep=2.5pt, fill=black, circle] at (7, 4)(v19){}; 
\node[inner sep=2.5pt, fill=black, circle] at (7, -4)(v20){}; 

\node[inner sep=2.5pt, fill=white, circle] at (7, -7.5)(v21){}; 

\draw[black, dotted, thick] (v1)  .. controls +(-7,1) and +(-7,-1) .. (v2);
\draw[black, thick] (v1) -- (v3);
\draw[black, thick] (v2) -- (v4);
\draw[black, thick] (v3) -- (v5);
\draw[black, thick] (v4) -- (v5);
\draw[black, thick] (v8) -- (v9);
\draw[black, thick] (v8) -- (v10);
\draw[black, thick] (v9) -- (v11);
\draw[black, thick] (v10) -- (v12);
\draw[black, dotted, thick] (v11)  .. controls +(7,1) and +(7,-1) .. (v12);

\draw[black, thick] (v15) -- (v5);
\draw[black, thick] (v16) -- (v5);
\draw[black, thick] (v13) -- (v15);
\draw[black, thick] (v14) -- (v16);

\draw[black, thick] (v17) -- (v8);
\draw[black, thick] (v18) -- (v8);
\draw[black, thick] (v17) -- (v19);
\draw[black, thick] (v18) -- (v20);

\draw[black, dotted, thick] (v13)  .. controls +(1,0.5) and +(-1,0.5) .. (v19);
\draw[black, dotted, thick] (v14)  .. controls +(1,-0.5) and +(-1,-0.5) .. (v20);

\draw[black, thick] (v3) -- (v15);
\draw[black, thick] (v17) -- (v9);

\node at (5,6) {$P_1$};
\node at (5,-6) {$P_2$};

\node at (-0.4,-1.5) {$\scriptstyle{u_2}$};
\node at (10.6,-1.5) {$\scriptstyle{v_2}$};

\node at (1, 3.8) {$\scriptstyle{u_1}$};
\node at (9, 3.8) {$\scriptstyle{v_1}$};

\node at (-5.5,0) {\small{even}};
\node at (5,0) {\small{even}};
\node at (15.5,0) {\small{even}};

\end{tikzpicture}
\begin{tikzpicture}[scale=0.23]

\node[inner sep=2.5pt, fill=black, circle] at (-4, 4)(v1){}; 
\node[inner sep=2.5pt, fill=black, circle] at (-4, -4)(v2){}; 
\node[inner sep=2.5pt, fill=black, circle] at (-2, 2.6)(v3){}; 
\node[inner sep=2.5pt, fill=black, circle] at (-2, -2.6)(v4){};

\node[inner sep=2.5pt, fill=black, circle] at (0, 0)(v5){}; 
\node[inner sep=2.5pt, fill=black, circle] at (3, 0)(v6){}; 
\node[inner sep=2.5pt, fill=black, circle] at (6, 0)(v7){}; 
\node[inner sep=2.5pt, fill=black, circle] at (9, 0)(v8){};

\node[inner sep=2.5pt, fill=black, circle] at (11, 2.6)(v9){}; 
\node[inner sep=2.5pt, fill=black, circle] at (11, -2.6)(v10){}; 
\node[inner sep=2.5pt, fill=black, circle] at (13, 4)(v11){}; 
\node[inner sep=2.5pt, fill=black, circle] at (13, -4)(v12){}; 

\node[inner sep=2.5pt, fill=white, circle] at (0, -7.2)(v21){}; 

\draw[black, dotted, thick] (v1)  .. controls +(-6,0) and +(-6,0) .. (v2);
\draw[black, thick] (v1) -- (v3);
\draw[black, thick] (v2) -- (v4);
\draw[black, thick] (v3) -- (v4);
\draw[black, thick] (v3) -- (v5);
\draw[black, thick] (v4) -- (v5);
\draw[black, thick] (v5) -- (v6);
\draw[black, dotted, thick] (v6) -- (v7);
\draw[black, thick] (v7) -- (v8);
\draw[black, thick] (v8) -- (v9);
\draw[black, thick] (v8) -- (v10);
\draw[black, thick] (v9) -- (v10);
\draw[black, thick] (v9) -- (v11);
\draw[black, thick] (v10) -- (v12);
\draw[black, dotted, thick] (v11)  .. controls +(6,0) and +(6,0) .. (v12);

\draw[black, thick] (v4) -- (v10);

\node at (-5,0) {\small{even}};
\node at (4.5,2) {$\xleftarrow{\makebox[0.3cm]{}}$ {\small{odd}} $\xrightarrow{\makebox[0.3cm]{}}$};
\node at (14,0) {\small{even}};

\node at (5,6) {\scriptsize{\textbf{Graph V}}};

\end{tikzpicture}
\end{center}
\vspace{-0.8cm}
\caption{A closer look at Graph V shown in Figure \ref{fig:minimal_non_strongly_perfect}}
\label{fig:minimal_fifth}
\end{figure}
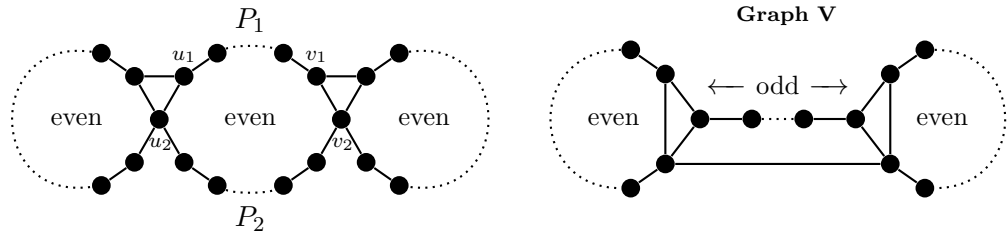

We continue by stating some observations and proving a few lemmas that we will use later.

\begin{lemma}
Let $K$ be a clique cutset in a graph $G$ where $(A, B, K)$ is a partition of $V(G)$ such that $A$ is anticomplete to $B$, and $K$ is a clique. Assume that either $K = \{k\}$ and $k$ is not anticomplete to $A$, or no vertex in $B$ is complete to $K$. Let $H = G[A \cup K]$. If $S$ is a strong stable set in $G$, then $S' = S \cap V(H)$ is a strong stable set in $H$. In particular, if $G$ has a strong stable set $S$ with $v \in K \cap S$ (resp. $v \notin K \cap S$), then $H$ has a strong stable set $S'$ with $v \in K \cap S'$ (resp. $v \notin K \cap S'$).
\label{lem:clique_cutset_sss}
\end{lemma}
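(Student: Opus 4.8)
The plan is to verify directly that $S' = S \cap V(H)$ satisfies the two requirements of a strong stable set in $H$: it is a stable set, and it meets every nonempty maximal clique of $H$. Stability is immediate, since $S'$ is a subset of the stable set $S$. So the whole content lies in the clique-covering condition, and the key claim I would isolate is the following: \emph{every nonempty maximal clique $M$ of $H$ is also a maximal clique of $G$.} Once this is established, the result is essentially free: if $M$ is a nonempty maximal clique of $H$, then $M$ is a maximal clique of $G$ and $M \subseteq V(H)$, so the strong-stable-set property of $S$ in $G$ gives $S \cap M \neq \emptyset$, whence $S' \cap M = S \cap M \cap V(H) = S \cap M \neq \emptyset$.

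To prove the claim, I would first record the structural consequence of $A$ being anticomplete to $B$: no clique of $G$ contains a vertex of $A$ together with a vertex of $B$, so every clique of $G$ lies in $A \cup K$ or in $B \cup K$. Now fix a nonempty maximal clique $M$ of $H = G[A \cup K]$. Since $M$ is maximal in $H$, no vertex of $(A \cup K) \setminus M$ can be added to it, so the only way $M$ could fail to be maximal in $G$ is if some $b \in B$ is complete to $M$. I would split on whether $M$ meets $A$. If $M \cap A \neq \emptyset$, pick $a \in M \cap A$; as $A$ is anticomplete to $B$, no $b \in B$ is adjacent to $a$, hence none is complete to $M$, and $M$ is maximal in $G$. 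If $M \cap A = \emptyset$, then $M \subseteq K$, and since $K$ is a clique of $H$ containing $M$, maximality of $M$ forces $M = K$.

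The case $M = K$ is exactly where the two hypotheses do their work, and this is the only delicate point. Under the hypothesis that no vertex of $B$ is complete to $K$, no $b \in B$ can extend $M = K$, so again $M$ is maximal in $G$. Under the other hypothesis, $K = \{k\}$ with $k$ not anticomplete to $A$: here I would observe that this case simply cannot arise, because $M = K = \{k\}$ being maximal in $H$ would require $k$ to have no neighbour in $V(H) \setminus \{k\} = A$, contradicting that $k$ has a neighbour in $A$. Thus in every case $M$ is a maximal clique of $G$, which proves the claim.

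Finally, the ``in particular'' statement follows with no extra work from $K \subseteq V(H)$: this gives $K \cap S' = K \cap S \cap V(H) = K \cap S$, so membership of any $v$ in $K \cap S'$ is equivalent to membership in $K \cap S$, yielding both the $v \in K \cap S$ and the $v \notin K \cap S$ assertions at once. The main obstacle, as indicated, is the bookkeeping around the clique $K$ itself: one must check that the two alternative hypotheses are precisely what guarantee that a maximal clique of $H$ contained in $K$ either does not occur or remains maximal in $G$, which is exactly what prevents $S'$ from missing this ``boundary'' clique.
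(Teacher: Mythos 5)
Your proposal is correct and follows essentially the same route as the paper: both reduce the lemma to the claim that every maximal clique of $H$ is maximal in $G$, and both handle the boundary case $M = K$ by invoking exactly the two alternative hypotheses (the paper phrases this as a proof by contradiction via a vertex $v \in B$ complete to $C$, while you split cases on whether $M$ meets $A$, but the content is identical). Your explicit treatment of stability and of the ``in particular'' clause via $K \cap S' = K \cap S$ fills in details the paper leaves implicit, and nothing is missing.
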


\begin{proof}
We prove that if $C$ is a maximal clique in $H$, then $C$ is a maximal clique in $G$. Suppose not. Then there exists a vertex $v \in V(G) \setminus C$ such that $v$ is complete to $C$. Since $C$ is maximal in $H$, it follows that $v \notin V(H)$, and so $v \in B$. Since $B$ is anticomplete to $A$, we deduce that $C \subseteq K$, and so $C = K$ as $K$ is a clique. This is a contradiction in the first case since $C = K = \{k\}$ is not a maximal clique in $H$ as $k$ is not anticomplete to $A$, and a contradiction in the second case since no vertex in $B$ is complete to $K$. Thus, every maximal clique of $H$ is also a maximal clique of $G$.
\end{proof}

\begin{observation}
If $v$ is a simplicial vertex in a graph $G$, and $S$ is a strong stable set of $G \setminus \{v\}$, then either $S$ or $S \cup \{v\}$ is a strong stable set of $G$.
\label{obs:simplicial}
\end{observation}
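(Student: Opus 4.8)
The plan is to describe the maximal cliques of $G$ in terms of those of $H = G \setminus \{v\}$, exploiting the fact that $N(v)$ is a clique. The central observation I would establish first is that $K_v = \{v\} \cup N(v)$ is a clique of $G$ (since $v$ is complete to the clique $N(v)$), and moreover that it is the \emph{unique} maximal clique of $G$ containing $v$. Indeed, any clique containing $v$ has all of its other vertices adjacent to $v$, hence in $N(v)$, so it is a subset of $K_v$ and by maximality must equal $K_v$; and $K_v$ itself is maximal because no vertex outside $\{v\} \cup N(v)$ is adjacent to $v$. This is the only place where simpliciality of $v$ is used, and it enters through this one clean statement.

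Next I would show that every maximal clique $C$ of $G$ with $v \notin C$ is in fact a maximal clique of $H$. It is certainly a clique of $H$; and if some $w \in V(H) \setminus C$ were complete to $C$ in $H$, then $w$ would be complete to $C$ in $G$ as well (deleting $v$ removes no edges among the remaining vertices), contradicting the maximality of $C$ in $G$. Since $S$ is a strong stable set of $H$, it meets every such $C$. Combined with the first step, this shows that the only maximal clique of $G$ that $S$ could possibly fail to meet is $K_v$, and $S$ meets $K_v$ if and only if $S \cap N(v) \neq \emptyset$.

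This reduces the statement to a short two-case check. If $S \cap N(v) \neq \emptyset$, then $S$ meets $K_v$ as well, and since $S$ is already stable in $G$ and meets every maximal clique of $G$, it is itself a strong stable set of $G$. If instead $S \cap N(v) = \emptyset$, then $v$ is anticomplete to $S$, so $S \cup \{v\}$ is stable; it meets $K_v$ through $v$, and it still meets every $v$-free maximal clique because it contains $S$. Hence $S \cup \{v\}$ is a strong stable set of $G$, and in either case the desired conclusion holds.

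All the steps are short, and I expect the only point requiring genuine care to be the verification in the second step that a maximal clique of $G$ avoiding $v$ remains \emph{maximal} (not merely a clique) in $H$ after deleting $v$; this is precisely where one uses that deleting a vertex introduces no new adjacencies. Everything else is bookkeeping around the single structural fact that $K_v$ is the unique maximal clique of $G$ meeting $v$.
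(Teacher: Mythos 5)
Your proof is correct. The paper states this as an Observation with no proof at all, treating it as immediate; your argument --- that $\{v\}\cup N(v)$ is the unique maximal clique of $G$ containing $v$, that every maximal clique of $G$ avoiding $v$ remains a maximal clique of $G\setminus\{v\}$, and the resulting two-case check on whether $S$ meets $N(v)$ --- is precisely the standard justification the authors intend, so you have simply supplied the details they omitted.
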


\begin{lemma}
Let $G$ be a minimal non-strongly-perfect graph. Then, $G$ has no simplicial vertex.
\label{lem:no_simplicial_in_minimal}
\end{lemma}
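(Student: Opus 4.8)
The plan is to argue by contradiction. Suppose $G$ is minimal non-strongly-perfect and has a simplicial vertex $v$; I will show that $G$ is in fact strongly perfect, which is the desired contradiction. The key reduction is that, by minimality, every proper induced subgraph of $G$ is strongly perfect, and in particular each of its induced subgraphs has a strong stable set. Since a graph is strongly perfect precisely when every one of its induced subgraphs has a strong stable set, and all the proper induced subgraphs of $G$ are already taken care of, to conclude that $G$ is strongly perfect it suffices to exhibit a single strong stable set of $G$ itself.

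To produce such a strong stable set, I would first pass to $H = G \setminus \{v\}$. As a proper induced subgraph of a minimal non-strongly-perfect graph, $H$ is strongly perfect, so in particular $H$ has a strong stable set $S$. Now I invoke Observation \ref{obs:simplicial}, whose hypotheses are exactly that $v$ is simplicial in $G$ and that $S$ is a strong stable set of $G \setminus \{v\}$: it yields that either $S$ or $S \cup \{v\}$ is a strong stable set of $G$. In either case $G$ has a strong stable set.

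Combining these facts gives the contradiction: $G$ has a strong stable set and every proper induced subgraph of $G$ is strongly perfect, so every induced subgraph of $G$ (including $G$ itself) has a strong stable set, i.e.\ $G$ is strongly perfect. This contradicts the assumption that $G$ is minimal non-strongly-perfect, and hence $G$ has no simplicial vertex.

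I expect essentially no obstacle here once Observation \ref{obs:simplicial} is available, since that observation carries all the combinatorial content describing how a simplicial vertex interacts with the maximal cliques of $G$. The only point I would state carefully is the reduction in the first paragraph: because minimality disposes of every \emph{proper} induced subgraph, strong perfection of $G$ need only be verified on $G$ itself, so a single strong stable set of $G$ is enough to complete the proof.
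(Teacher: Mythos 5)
Your proposal is correct and follows exactly the paper's argument: delete the simplicial vertex $v$, use minimality to get a strong stable set of $G \setminus \{v\}$, and apply Observation \ref{obs:simplicial} to obtain a strong stable set of $G$, contradicting the fact that a minimal non-strongly-perfect graph cannot itself have one. The extra care you take in spelling out why a single strong stable set of $G$ suffices is a sound (and implicit in the paper) reduction.
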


\begin{proof}
Let $v$ be a simplicial vertex of $G$. By minimality of $G$, the graph $G \setminus v$ is strongly perfect. Now, by Observation \ref{obs:simplicial}, it follows that $G$ has a strong stable set, a contradiction.
\end{proof}

The \emph{basis} of a graph $G$, denoted by $\mathcal{B}(G)$, is the set of all proper induced subgraphs of $G$ with no simplicial vertex. We say that $G$ has a \emph{strong basis} if the graphs in $\mathcal{B}(G)$ are all strongly perfect. Let $G$ be a graph with a strong basis. In view of Lemma \ref{lem:no_simplicial_in_minimal}, if $G$ has a strong stable set, then $G$ is strongly perfect, and if $G$ has no strong stable set, then $G$ is minimal non-strongly-perfect. It is easy to check that the graphs in Figure \ref{fig:graphs_A_1234567} have a strong basis and a strong stable set, and therefore they are strongly perfect.

\begin{observation}
Let $k, m, n \geq 4$ be even. The graphs $A_1, \dots, A_6$ in Figure \ref{fig:graphs_A_1234567} are strongly perfect.
\end{observation}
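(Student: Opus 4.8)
The plan is to apply directly the framework just established: to prove that each $A_i$ is strongly perfect, it suffices to verify two things, namely that $A_i$ has a \emph{strong basis} (every proper induced subgraph with no simplicial vertex is strongly perfect) and that $A_i$ itself has a strong stable set. Once both hold, the remark following Lemma \ref{lem:no_simplicial_in_minimal} immediately gives that $A_i$ is strongly perfect. So the proof reduces to two verification tasks per graph, and since the figure is referenced but not reproduced in the excerpt, I would structure the argument so that it applies uniformly to all six families $A_1,\dots,A_6$ regardless of the parity parameters $k,m,n$.

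First I would exhibit an explicit strong stable set in each $A_i$. The key structural feature to exploit is that these graphs are built from even paths and even holes attached at small cliques; in an even hole $C_{2t}$ one can two-colour the vertices, and a strong stable set of a hole is simply a set meeting every maximal clique (here every edge, since the maximal cliques of a hole are its edges), so one alternate class of the hole works. I would describe the strong stable set by selecting, in a consistent alternating fashion along each even path and around each even hole, the vertices that hit every edge (maximal clique) while avoiding the central attachment cliques as needed so that the chosen vertices remain pairwise non-adjacent across the gluing points. The evenness hypotheses on $k,m,n$ are exactly what makes these alternating choices close up consistently, so I would point to parity as the reason the construction is well-defined.

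Second, and this is where I expect the real work to lie, I would argue that every proper induced subgraph $H$ of $A_i$ with no simplicial vertex is strongly perfect. The useful observation is that deleting any vertex from one of these path-and-hole gadgets tends to create a simplicial vertex (an endpoint of a broken path, or a degree-two vertex on a hole whose two neighbours become adjacent or whose neighbourhood becomes a clique). Thus I would try to show that $\mathcal{B}(A_i)$ is in fact very small — ideally that any induced subgraph surviving the no-simplicial-vertex condition must already be one of a short explicit list of small graphs (for instance even holes or complete multipartite-type pieces) that are easily checked to be strongly perfect by hand, or are strongly perfect by a known closure property. The clique-cutset structure of the $A_i$ is what I would lean on here: each attachment clique is a clique cutset, so Lemma \ref{lem:clique_cutset_sss} lets me decompose $H$ across these cutsets and handle the pieces independently.

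The main obstacle will be the case analysis in this second task: making precise the claim ``no proper induced subgraph without simplicial vertices is anything other than a short list of strongly perfect graphs.'' I would handle it by a systematic argument on which parts of the gadget survive in $H$ — if any internal path vertex is deleted the path breaks and produces a simplicial endpoint (forcing, by the no-simplicial hypothesis, that whole path to be deleted down to its attachment clique), and similarly for the holes — so that the no-simplicial condition propagates and drastically limits $\mathcal{B}(A_i)$. The finite residue of small cases left after this propagation can then be checked directly. Because all six graphs share the same building blocks (even paths and even holes glued at cliques), I expect one uniform propagation lemma to cover every $A_i$, with only the bookkeeping of which attachment cliques appear differing from graph to graph.
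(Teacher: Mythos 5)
Your proposal follows exactly the paper's argument: the paper disposes of this observation in a single sentence, noting that each $A_i$ has a strong basis and a strong stable set and is therefore strongly perfect by the remark following Lemma \ref{lem:no_simplicial_in_minimal}. The extra detail you supply --- alternating selections along the even holes and paths to build the strong stable set, and the observation that vertex deletion forces simplicial vertices so that $\mathcal{B}(A_i)$ collapses to a short, easily checked list --- is precisely the ``easy to check'' verification the paper leaves to the reader.
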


\vspace{-0.25cm}

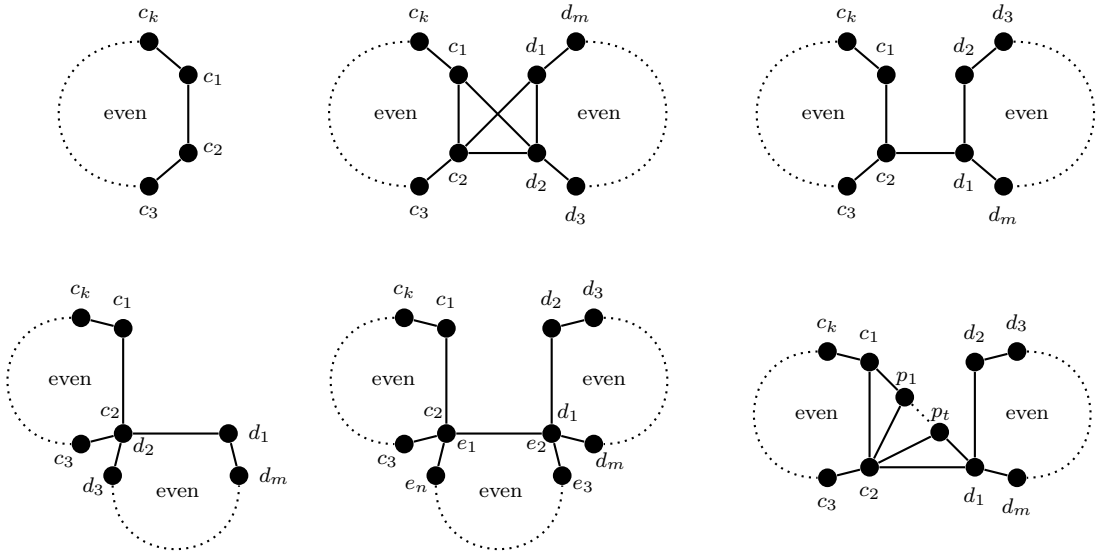
\begin{figure}[h]
\begin{center}
\hspace{0.6cm}
\begin{tikzpicture}[scale=0.26]

\node[label=above:{$\scriptstyle{c_k}$}, inner sep=2.5pt, fill=black, circle] at (-4, 3.7)(v1){}; 
\node[label=below:{$\scriptstyle{c_3}$}, inner sep=2.5pt, fill=black, circle] at (-4, -3.7)(v2){}; 
\node[inner sep=2.5pt, fill=black, circle] at (-2, 2)(v3){}; 
\node[inner sep=2.5pt, fill=black, circle] at (-2, -2)(v4){};

\node at (-0.7,1.8) {$\scriptstyle{c_1}$};
\node at (-0.7,-1.8) {$\scriptstyle{c_2}$};

\node[inner sep=2.5pt, fill=white, circle] at (0, -7.4)(v21){}; 

\draw[black, dotted, thick] (v1)  .. controls +(-6,0) and +(-6,0) .. (v2);
\draw[black, thick] (v1) -- (v3);
\draw[black, thick] (v2) -- (v4);
\draw[black, thick] (v3) -- (v4);

\node at (-5.2, 0) {\scriptsize{even}};

\end{tikzpicture}
\hspace{0.6cm}
\begin{tikzpicture}[scale=0.26]

\node[label=above:{$\scriptstyle{c_k}$}, inner sep=2.5pt, fill=black, circle] at (-4, 3.7)(v1){}; 
\node[label=below:{$\scriptstyle{c_3}$}, inner sep=2.5pt, fill=black, circle] at (-4, -3.7)(v2){}; 
\node[label=above:{$\scriptstyle{c_1}$}, inner sep=2.5pt, fill=black, circle] at (-2, 2)(v3){}; 
\node[label=below:{$\scriptstyle{c_2}$}, inner sep=2.5pt, fill=black, circle] at (-2, -2)(v4){};

\node[label=above:{$\scriptstyle{d_1}$}, inner sep=2.5pt, fill=black, circle] at (2, 2)(v9){}; 
\node[label=below:{$\scriptstyle{d_2}$}, inner sep=2.5pt, fill=black, circle] at (2, -2)(v10){}; 
\node[label=above:{$\scriptstyle{d_m}$}, inner sep=2.5pt, fill=black, circle] at (4, 3.7)(v11){}; 
\node[label=below:{$\scriptstyle{d_3}$}, inner sep=2.5pt, fill=black, circle] at (4, -3.7)(v12){}; 

\node[inner sep=2.5pt, fill=white, circle] at (0, -7.4)(v21){}; 

\draw[black, dotted, thick] (v1)  .. controls +(-6,0) and +(-6,0) .. (v2);
\draw[black, thick] (v1) -- (v3);
\draw[black, thick] (v2) -- (v4);
\draw[black, thick] (v3) -- (v4);
\draw[black, thick] (v3) -- (v10);
\draw[black, thick] (v4) -- (v9);
\draw[black, thick] (v4) -- (v10);
\draw[black, thick] (v9) -- (v10);
\draw[black, thick] (v9) -- (v11);
\draw[black, thick] (v10) -- (v12);
\draw[black, dotted, thick] (v11)  .. controls +(6,0) and +(6,0) .. (v12);

\node at (-5.2, 0) {\scriptsize{even}};
\node at (5.2, 0) {\scriptsize{even}};

\end{tikzpicture}
\hspace{0.2cm}
\begin{tikzpicture}[scale=0.26]

\node[label=above:{$\scriptstyle{c_k}$}, inner sep=2.5pt, fill=black, circle] at (-4, 3.7)(v1){}; 
\node[label=below:{$\scriptstyle{c_3}$}, inner sep=2.5pt, fill=black, circle] at (-4, -3.7)(v2){}; 
\node[label=above:{$\scriptstyle{c_1}$}, inner sep=2.5pt, fill=black, circle] at (-2, 2)(v3){}; 
\node[label=below:{$\scriptstyle{c_2}$}, inner sep=2.5pt, fill=black, circle] at (-2, -2)(v4){};

\node[label=above:{$\scriptstyle{d_2}$}, inner sep=2.5pt, fill=black, circle] at (2, 2)(v9){}; 
\node[label=below:{$\scriptstyle{d_1}$}, inner sep=2.5pt, fill=black, circle] at (2, -2)(v10){}; 
\node[label=above:{$\scriptstyle{d_3}$}, inner sep=2.5pt, fill=black, circle] at (4, 3.7)(v11){}; 
\node[label=below:{$\scriptstyle{d_m}$}, inner sep=2.5pt, fill=black, circle] at (4, -3.7)(v12){}; 

\node[inner sep=2.5pt, fill=white, circle] at (0, -7.4)(v21){}; 

\draw[black, dotted, thick] (v1)  .. controls +(-6,0) and +(-6,0) .. (v2);
\draw[black, thick] (v1) -- (v3);
\draw[black, thick] (v2) -- (v4);
\draw[black, thick] (v3) -- (v4);
\draw[black, thick] (v4) -- (v10);
\draw[black, thick] (v9) -- (v10);
\draw[black, thick] (v9) -- (v11);
\draw[black, thick] (v10) -- (v12);
\draw[black, dotted, thick] (v11)  .. controls +(6,0) and +(6,0) .. (v12);

\node at (-5.2, 0) {\scriptsize{even}};
\node at (5.2, 0) {\scriptsize{even}};

\end{tikzpicture}
\hspace{-0.3cm}
\begin{tikzpicture}[scale=0.28]

\node[label=above:{$\scriptstyle{c_k}$}, inner sep=2.5pt, fill=black, circle] at (-4, 2.5)(v1){}; 
\node[inner sep=2.5pt, fill=black, circle] at (-4, -3.5)(v2){}; 
\node[label=above:{$\scriptstyle{c_1}$}, inner sep=2.5pt, fill=black, circle] at (-2, 2)(v3){}; 
\node[inner sep=2.5pt, fill=black, circle] at (-2, -3)(v4){};

\node at (-4.8,-4.3) {$\scriptstyle{c_3}$};
\node at (-2.6,-2) {$\scriptstyle{c_2}$};

\node[label=right:{$\scriptstyle{d_1}$}, inner sep=2.5pt, fill=black, circle] at (3, -3)(v10){}; 

\node[inner sep=2.5pt, fill=black, circle] at (-2.5, -5)(v13){}; 
\node[label=right:{$\scriptstyle{d_m}$}, inner sep=2.5pt, fill=black, circle] at (3.5, -5)(v14){};

\node at (-1,-3.6) {$\scriptstyle{d_2}$};
\node at (-3.4,-5.5) {$\scriptstyle{d_3}$};

\draw[black, dotted, thick] (v1)  .. controls +(-4.5,0) and +(-4.5,0) .. (v2);
\draw[black, thick] (v1) -- (v3);
\draw[black, thick] (v2) -- (v4);
\draw[black, thick] (v3) -- (v4);
\draw[black, thick] (v4) -- (v10);
\draw[black, thick] (v4) -- (v13);
\draw[black, thick] (v10) -- (v14);

\draw[black, dotted, thick] (v13)  .. controls +(0,-4.5) and + (0,-4.5) .. (v14);

\node at (-4.5, -0.5) {\scriptsize{even}};
\node at (0.6, -5.8) {\scriptsize{even}};

\end{tikzpicture}
\begin{tikzpicture}[scale=0.28]

\node[label=above:{$\scriptstyle{c_k}$}, inner sep=2.5pt, fill=black, circle] at (-4, 2.5)(v1){}; 
\node[inner sep=2.5pt, fill=black, circle] at (-4, -3.5)(v2){}; 
\node[label=above:{$\scriptstyle{c_1}$}, inner sep=2.5pt, fill=black, circle] at (-2, 2)(v3){}; 
\node[inner sep=2.5pt, fill=black, circle] at (-2, -3)(v4){};

\node at (-4.8,-4.3) {$\scriptstyle{c_3}$};
\node at (-2.6,-2) {$\scriptstyle{c_2}$};

\node[label=above:{$\scriptstyle{d_2}$}, inner sep=2.5pt, fill=black, circle] at (3, 2)(v9){}; 
\node[inner sep=2.5pt, fill=black, circle] at (3, -3)(v10){}; 
\node[label=above:{$\scriptstyle{d_3}$}, inner sep=2.5pt, fill=black, circle] at (5, 2.5)(v11){}; 
\node[inner sep=2.5pt, fill=black, circle] at (5, -3.5)(v12){}; 

\node at (5.8,-4.3) {$\scriptstyle{d_m}$};
\node at (3.8,-2) {$\scriptstyle{d_1}$};

\node[inner sep=2.5pt, fill=black, circle] at (-2.5, -5)(v13){}; 
\node[inner sep=2.5pt, fill=black, circle] at (3.5, -5)(v14){};

\node at (-1,-3.6) {$\scriptstyle{e_1}$};
\node at (2.3, -3.6) {$\scriptstyle{e_2}$};
\node at (4.5,-5.5) {$\scriptstyle{e_3}$};
\node at (-3.4,-5.5) {$\scriptstyle{e_n}$};

\draw[black, dotted, thick] (v1)  .. controls +(-4.5,0) and +(-4.5,0) .. (v2);
\draw[black, thick] (v1) -- (v3);
\draw[black, thick] (v2) -- (v4);
\draw[black, thick] (v3) -- (v4);
\draw[black, thick] (v4) -- (v10);
\draw[black, thick] (v9) -- (v10);
\draw[black, thick] (v9) -- (v11);
\draw[black, thick] (v10) -- (v12);
\draw[black, thick] (v4) -- (v13);
\draw[black, thick] (v10) -- (v14);
\draw[black, dotted, thick] (v11)  .. controls +(4.5,0) and +(4.5,0) .. (v12);

\draw[black, dotted, thick] (v13)  .. controls +(0,-4.5) and + (0,-4.5) .. (v14);

\node at (-4.5, -0.5) {\scriptsize{even}};
\node at (0.6, -5.8) {\scriptsize{even}};
\node at (5.5, -0.5) {\scriptsize{even}};

\end{tikzpicture}
\hspace{0.3cm}
\begin{tikzpicture}[scale=0.28]

\node[label=above:{$\scriptstyle{c_k}$}, inner sep=2.5pt, fill=black, circle] at (-4, 2.5)(v1){}; 
\node[label=below:{$\scriptstyle{c_3}$}, inner sep=2.5pt, fill=black, circle] at (-4, -3.5)(v2){}; 
\node[label=above:{$\scriptstyle{c_1}$}, inner sep=2.5pt, fill=black, circle] at (-2, 2)(v3){}; 
\node[label=below:{$\scriptstyle{c_2}$}, inner sep=2.5pt, fill=black, circle] at (-2, -3)(v4){};

\node[inner sep=2.5pt, fill=white, circle] at (0, -7.5)(v21){};

\node[label=above:{$\scriptstyle{d_2}$}, inner sep=2.5pt, fill=black, circle] at (3, 2)(v9){}; 
\node[label=below:{$\scriptstyle{d_1}$}, inner sep=2.5pt, fill=black, circle] at (3, -3)(v10){}; 
\node[label=above:{$\scriptstyle{d_3}$}, inner sep=2.5pt, fill=black, circle] at (5, 2.5)(v11){}; 
\node[label=below:{$\scriptstyle{d_m}$}, inner sep=2.5pt, fill=black, circle] at (5, -3.5)(v12){}; 

\node at (-0.2,1.2) {$\scriptstyle{p_1}$};
\node at (1.4,-0.5) {$\scriptstyle{p_t}$};

\node[inner sep=2.5pt, fill=black, circle] at (-0.33, 0.33)(v5){}; 
\node[inner sep=2.5pt, fill=black, circle] at (1.33, -1.33)(v6){};

\draw[black, dotted, thick] (v1)  .. controls +(-4.5,0) and +(-4.5,0) .. (v2);
\draw[black, thick] (v1) -- (v3);
\draw[black, thick] (v2) -- (v4);
\draw[black, thick] (v3) -- (v4);
\draw[black, thick] (v4) -- (v10);
\draw[black, thick] (v9) -- (v10);
\draw[black, thick] (v9) -- (v11);
\draw[black, thick] (v10) -- (v12);
\draw[black, dotted, thick] (v11)  .. controls +(4.5,0) and +(4.5,0) .. (v12);

\draw[black, thick] (v3) -- (v5);
\draw[black, thick] (v6) -- (v10);
\draw[black, dotted, thick] (v5) -- (v6);
\draw[black, thick] (v4) -- (v5);
\draw[black, thick] (v4) -- (v6);

\node at (-4.5, -0.5) {\scriptsize{even}};
\node at (5.5, -0.5) {\scriptsize{even}};

\end{tikzpicture}
\end{center}
\vspace{-0.7cm}
\caption{The graphs $A_1, A_2, A_3, A_4, A_5, A_6$ are strongly perfect}
\label{fig:graphs_A_1234567}
\end{figure}

We now introduce three graphs that play a role in constructing new minimal non-strongly-perfect graphs. A \emph{larva} is a graph with vertex set $\{v, c_1, \dots, c_k\}$ where $\{c_1, \dots, c_k\}$ is an even hole and $v$ is adjacent to $c_1, c_2$, as in Figure \ref{fig:evolution}. A \emph{pupa} is a graph with vertex set $\{v, c_1, \dots, c_k, p_1, \dots, p_t\}$ where 
\begin{itemize}
\itemsep0em
\item $\{c_1, \dots, c_k\}$ is an even hole,
\item $v \dd p_1 \dd \dots \dd p_t \dd c_1$ is an odd path,
\item $c_2$ is complete to $\{v, p_1, \dots, p_t, c_1\}$,
\item there is no edge other than the ones specified above.
\end{itemize}

\noindent A \emph{butterfly} is a graph with vertex set $\{v, a_1, a_2, \dots, a_k = b_1, b_2, \dots, b_\ell = c_1, c_2, \dots, c_m\}$ such that 
\begin{itemize}
\itemsep0em
\item $P_1 = a_1 \dd a_2 \dd \dots \dd a_k$ is an even path of length at least two,
\item $P_2 = b_1 \dd b_2 \dd \dots \dd b_\ell$ is an odd path,
\item $P_3 = c_1 \dd c_2 \dd \dots \dd c_m$ is an even path of length at least two,
\item $v$ is complete to $P_2 \cup \{a_1, c_m\}$, and
\item there is no edge other than the ones specified above.
\end{itemize}

\vspace{-0.3cm}

\begin{figure}[h]
\begin{center}
\begin{tikzpicture}[scale=0.25]

\node[label=left:{$\scriptstyle{c_k}$}, inner sep=2.5pt, fill=black, circle] at (0, 0)(v1){}; 
\node[label=right:{$\scriptstyle{c_3}$}, inner sep=2.5pt, fill=black, circle] at (9, 0)(v2){}; 
\node[label=left:{$\scriptstyle{c_1}$}, inner sep=2.5pt, fill=black, circle] at (2, 2)(v3){}; 
\node[label=right:{$\scriptstyle{c_2}$}, inner sep=2.5pt, fill=black, circle] at (7, 2)(v4){};
\node[label=above:{$\scriptstyle{v}$}, inner sep=2.5pt, fill=black, circle] at (4.5, 6)(v5){}; 

\draw[black, dotted, thick] (v1)  .. controls +(0,-6.5) and +(0,-6.5) .. (v2);
\draw[black, thick] (v1) -- (v3);
\draw[black, thick] (v2) -- (v4);
\draw[black, thick] (v3) -- (v4);
\draw[black, thick] (v3) -- (v5);
\draw[black, thick] (v4) -- (v5);

\node at (4.5,-1) {\small{even}};

\end{tikzpicture}
\hspace{1cm}
\begin{tikzpicture}[scale=0.25]

\node[label=left:{$\scriptstyle{c_k}$}, inner sep=2.5pt, fill=black, circle] at (0, 0)(v1){}; 
\node[label=right:{$\scriptstyle{c_3}$}, inner sep=2.5pt, fill=black, circle] at (9, 0)(v2){}; 
\node[label=left:{$\scriptstyle{c_1}$}, inner sep=2.5pt, fill=black, circle] at (2, 2)(v3){}; 
\node[label=right:{$\scriptstyle{c_2}$}, inner sep=2.5pt, fill=black, circle] at (7, 2)(v4){};
\node[label=above:{$\scriptstyle{v}$}, inner sep=2.5pt, fill=black, circle] at (4.5, 6)(v5){}; 

\node[label=left:{$\scriptstyle{p_1}$}, inner sep=2.5pt, fill=black, circle] at (3.75, 4.8)(v6){};
\node[label=left:{$\scriptstyle{p_t}$}, inner sep=2.5pt, fill=black, circle] at (2.75, 3.2)(v7){}; 

\draw[black, dotted, thick] (v1)  .. controls +(0,-6.5) and +(0,-6.5) .. (v2);
\draw[black, thick] (v1) -- (v3);
\draw[black, thick] (v2) -- (v4);
\draw[black, thick] (v3) -- (v4);
\draw[black, thick] (v4) -- (v5);
\draw[black, thick] (v4) -- (v6);
\draw[black, thick] (v4) -- (v7);
\draw[black, thick] (v5) -- (v6);
\draw[black, thick] (v3) -- (v7);
\draw[black, dotted, thick] (v6) -- (v7);

\draw[black, thick] (v4) -- (4.8, 3.5);
\draw[black, thick] (v4) -- (4.8, 3);

\node at (4.5,-1) {\small{even}};

\end{tikzpicture}
\hspace{1cm}
\begin{tikzpicture}[scale=0.21]

\node[label=above:{$\scriptstyle{v}$}, inner sep=2.5pt, fill=black, circle] at (0, 0)(v){};

\node[label=right:{$\scriptstyle{c_m}$}, inner sep=2.5pt, fill=black, circle] at (8, -1)(cm){};
\node[label=left:{$\scriptstyle{a_1}$}, inner sep=2.5pt, fill=black, circle] at (-8, -1)(a1){};

\node[label=below:{$\stackbelow{\scriptstyle{a_k}}{\scriptstyle{b_1}}$}, inner sep=2.5pt, fill=black, circle] at (-4, -7.4)(ak){};
\node[label=below:{$\stackbelow{\scriptstyle{c_1}}{\scriptstyle{b_\ell}}$}, inner sep=2.5pt, fill=black, circle] at (4, -7.4)(c1){};

\draw[black, dotted, thick] (a1)  .. controls +(0,-5) and +(0.2,-0.1) .. (ak);

\draw[black, thick] (ak)  .. controls +(1.5,-1) and +(-1.5,-1) .. (c1);

\draw[black, dotted, thick] (c1)  .. controls +(0.1,0.02) and +(0,-5) .. (cm);

\draw[black, thick] (v) -- (a1);
\draw[black, thick] (v) -- (cm);
\draw[black, thick] (v) -- (ak);
\draw[black, thick] (v) -- (c1);
\draw[black, thick] (v) -- (-1.4, -5);
\draw[black, thick] (v) -- (-0.7, -5.5);
\draw[black, thick] (v) -- (0, -6);
\draw[black, thick] (v) -- (0.7, -5.5);
\draw[black, thick] (v) -- (1.4, -5);

\node at (-4.6,-3.1) {\small{even}};
\node at (4.6,-3.1) {\small{even}};

\node at (-8,-6) {$\scriptstyle{P_1}$};
\node at (0,-9.7) {$\scriptstyle{P_2}$};
\node at (8,-6) {$\scriptstyle{P_3}$};

\end{tikzpicture}
\end{center}
\vspace{-0.7cm}
\caption{A larva, a pupa, and a butterfly}
\label{fig:evolution}
\end{figure}
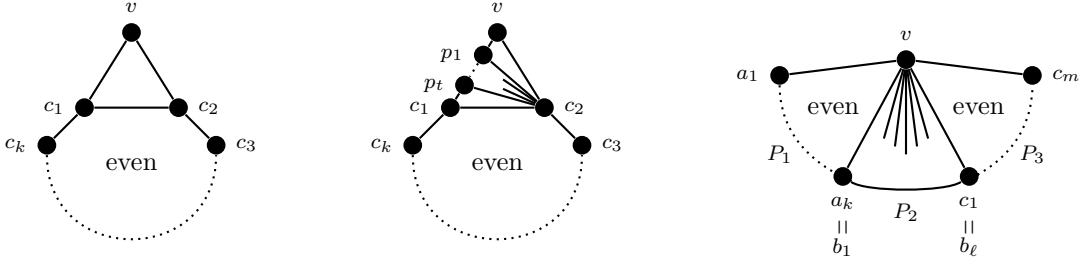

We call $v$ the \emph{head} of the larva (resp. pupa, butterfly). The edges $vc_1$, $vc_2$ of a larva $D$ are called \emph{the side edges} of $D$. Larvas, pupas, and butterflies are strongly perfect as they have a strong basis and a strong stable set.


\section{New minimal non-strongly-perfect graphs} \label{sec:mnsp_graphs}

In this section we present different ways of obtaining new minimal non-strongly-perfect graphs.

\subsection{Desirable/Undesirable heads} \label{subsec:un_desirable}

Let $G$ be a strongly perfect graph and let $v \in V(G)$. We say that $v$ is \emph{wanted} in $G$ if $v \in S$ for every strong stable set $S$ in $G$, \emph{unwanted} in $G$ if $v \notin S$ for every strong stable set $S$ in $G$, and \emph{forced} in $G$ if $v$ is wanted or unwanted in $G$. Let us say that $v$ is \emph{desirable} (resp. \emph{undesirable}) in $G$ if $v$ is wanted (resp. unwanted) in $G$ and is not forced in $G \setminus \{u\}$ for every $u \in V(G)$, $u \neq v$.

\begin{observation}
Let $D$ be a larva or a pupa and let $v$ be the head of $D$. Then, $v$ is undesirable. Let $T$ be a butterfly and let $u$ be the head of $T$. Then, $u$ is desirable.
\label{obs:desirable_undesirable}
\end{observation}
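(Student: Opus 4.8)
The plan is to verify the two clauses in the definitions of \emph{desirable}/\emph{undesirable} separately. Since strong perfection is hereditary, every induced subgraph of a larva, pupa, or butterfly is strongly perfect, so the notions of wanted/unwanted/forced apply throughout. Both clauses rest on one elementary fact about paths: for a path $x_0 \dd x_1 \dd \dots \dd x_n$, a set $S$ is a stable set meeting every edge if and only if $S$ is one of the two sides of the bipartition of the path (the even-indexed or the odd-indexed vertices). Indeed, if $S$ is stable and meets every edge, then $V\setminus S$ is stable too, so $(S,V\setminus S)$ is a bipartition, and a connected bipartite graph has a unique bipartition. The consequence I will use repeatedly is a parity statement: the endpoints $x_0,x_n$ lie on opposite sides exactly when $n$ is odd, i.e.\ when the path has an even number of vertices; so in that case no stable edge-meeting set contains both endpoints.

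First I would establish the ``forced'' clause. For a larva the maximal cliques are the triangle $\sset{v,c_1,c_2}$ together with the edges $c_2c_3, c_3c_4, \dots, c_kc_1$ of the even hole; for a pupa they are the triangles $\sset{c_2,x,y}$, one for each edge $xy$ of the odd path $v \dd p_1 \dd \dots \dd p_t \dd c_1$, together with the edges $c_2c_3,\dots,c_kc_1$. Suppose a strong stable set $S$ contains the head $v$. Then $c_1,c_2\notin S$: directly for a larva (both are adjacent to $v$); and for a pupa because $c_2\notin S$, and then, meeting the triangles along the odd path, the restriction of $S$ to that path must be its bipartition side containing $v$, which excludes the opposite-parity endpoint $c_1$ since that path has an even number of vertices. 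In either case, meeting $c_2c_3$ and $c_kc_1$ now forces $c_3,c_k\in S$, so $S$ meets every edge of the sub-path $c_3 \dd c_4 \dd \dots \dd c_k$ and contains both of its endpoints; as this path has $k-2$ (even) vertices, the parity statement is contradicted, and $v$ is unwanted. For a butterfly, observe that $P_1\cup P_2\cup P_3$ is a single path $Q=a_1 \dd \dots \dd c_m$ whose length is even$+$odd$+$even, hence odd, and that $v$ is complete to the middle segment $P_2$ and to the two ends $a_1,c_m$. If a strong stable set $S$ omitted $v$, then meeting the maximal cliques $\sset{v,a_1}$ and $\sset{v,c_m}$ forces $a_1,c_m\in S$, while the triangles along $P_2$ force $S$ to meet every edge of $Q$; thus $S\cap V(Q)$ would be a stable edge-meeting set of $Q$ containing both endpoints of a path with an even number of vertices, which is impossible. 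Hence the head of a butterfly is wanted.

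Next I would prove the ``not forced after deletion'' clause: for every vertex $w\neq v$ I must exhibit a strong stable set of the deleted graph that contains $v$ and one that avoids $v$. The mechanism is precisely that deleting any vertex of the even hole (larva, pupa) or of the path $Q$ (butterfly) breaks the path appearing in the parity statement into shorter paths, so that both a $v$-containing and a $v$-avoiding construction become feasible. I would cut down the casework using the obvious automorphisms — the $c_1\leftrightarrow c_2$ symmetry of a larva, and the reflection of a butterfly interchanging $P_1$ and $P_3$ and reversing $P_2$ — and, for each remaining type of $w$, write the two required stable sets explicitly as $v$ (or nothing) together with suitably chosen bipartition classes of the residual paths, checking directly that they meet every maximal clique of the reduced graph.

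The main obstacle is this final case analysis rather than any single idea. Deleting a ``structural'' vertex changes which cliques are maximal: edges that were absorbed into triangles (for instance the hole edge $c_1c_2$, or a $P_2$-edge of a butterfly) can reappear as maximal cliques once their common neighbour is removed, so in each case the maximal cliques of the reduced graph must be recomputed before the constructed sets are certified. The cases I expect to require the most care are the deletion of the apex $c_2$ of a pupa and the deletion of an internal vertex of the middle path $P_2$ of a butterfly, since a single such deletion affects many triangles simultaneously. Where convenient this bookkeeping can be shortened by recognising several of the reduced graphs as the strongly perfect graphs $A_1,\dots,A_6$ of Figure~\ref{fig:graphs_A_1234567}, whose strong stable sets are already understood.
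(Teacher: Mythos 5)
The paper offers no proof of this observation at all --- it is simply asserted (the only related remark in the text is that larvas, pupas, and butterflies have a strong basis and a strong stable set) --- so there is no argument of the authors' to compare yours against; what follows is an assessment of your proposal on its own terms. Your treatment of the ``forced'' clause is correct and complete: the inventories of maximal cliques for the larva, the pupa, and the butterfly are right (in particular $\sset{v,a_1}$ and $\sset{v,c_m}$ are maximal in a butterfly because $P_1$ and $P_3$ have length at least two, so $a_2$ and $c_{m-1}$ are not neighbours of the head), and the reduction to the parity of the number of vertices of a path carrying a stable, edge-meeting set is exactly the right mechanism: the hole minus $\sset{c_1,c_2}$ has $k-2$ (even) vertices, the path $v\dd p_1\dd\dots\dd p_t\dd c_1$ has an even number of vertices since it is odd, and $P_1\cup P_2\cup P_3$ has odd length, so each case terminates in the contradiction you describe. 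The only incompleteness is that the ``not forced in $G\setminus\sset{u}$ for every $u\neq v$'' clause is left as a programme rather than carried out; since this clause is the entire content of ``desirable/undesirable'' beyond ``wanted/unwanted'', a finished proof does need the casework. That said, the plan is sound and the cases you single out as delicate do go through: deleting the apex $c_2$ of a pupa leaves the single induced path $v\dd p_1\dd\dots\dd p_t\dd c_1\dd c_k\dd\dots\dd c_3$, whose two bipartition classes give a strong stable set containing $v$ and one avoiding it; deleting an internal vertex $b_i$ of $P_2$ in a butterfly leaves two paths $Q_1, Q_2$ hanging from $v$, and one checks that $\sset{v}$ together with the classes of $Q_1,Q_2$ avoiding $N(v)$ works in one direction, while the classes containing $a_1$ and $c_m$ work in the other (the boundary subcases $i=2$ and $i=\ell-1$ are covered because $k$ and $m$ are odd, so $a_k$ lies in the class of $a_1$ and $c_1$ in the class of $c_m$). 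Your warning that deleting a vertex can promote previously non-maximal edges to maximal cliques is exactly the point that needs watching in each case. So: no wrong step and no missing idea, but the second clause should be written out before this counts as a proof rather than a proof outline.
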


For $i=1,2$, let $G_i$ be a strongly perfect graph and let $v_i \in V(G_i)$ be a desirable or an undesirable vertex. If $v_1, v_2$ are both desirable or both undesirable, one can obtain a minimal non-strongly-perfect graph by connecting $v_1$ and $v_2$ via an odd path. If one is desirable and the other is undesirable, one can obtain a minimal non-strongly-perfect graph by connecting $v_1$ and $v_2$ via an even path. We note that Graph II shown in Figure \ref{fig:minimal_non_strongly_perfect} is obtained this way. In view of Observation \ref{obs:desirable_undesirable}, instead of connecting two larvas via an odd path, we connect two pupas via an odd path, or two butterflies via an odd path, or a pupa and a butterfly via an even path, as shown in Figure \ref{fig:new_mnsp_graphs}.

\begin{figure}[h]
\begin{center}
\begin{tikzpicture}[scale=0.24]

\node[label=above:{$\scriptstyle{c_k}$}, inner sep=2.5pt, fill=black, circle] at (-4, 4)(v1){}; 
\node[label=below:{$\scriptstyle{c_3}$}, inner sep=2.5pt, fill=black, circle] at (-4, -4)(v2){}; 
\node[label=above:{$\scriptstyle{c_1}$}, inner sep=2.5pt, fill=black, circle] at (-2, 2.6)(v3){}; 
\node[label=below:{$\scriptstyle{c_2}$}, inner sep=2.5pt, fill=black, circle] at (-2, -2.6)(v4){};

\node[label=above:{$\scriptstyle{p_1}$}, inner sep=2.5pt, fill=black, circle] at (-0.67, 1.73)(v13){}; 
\node[label=above:{$\scriptstyle{p_t}$}, inner sep=2.5pt, fill=black, circle] at (0.67, 0.86)(v14){}; 

\node[label=above:{$\scriptstyle{v_1}$}, inner sep=2.5pt, fill=black, circle] at (2, 0)(v5){}; 
\node[label=above:{$\scriptstyle{r_1}$}, inner sep=2.5pt, fill=black, circle] at (5, 0)(v6){}; 
\node[label=above:{$\scriptstyle{r_n}$}, inner sep=2.5pt, fill=black, circle] at (8, 0)(v7){}; 
\node[label=above:{$\scriptstyle{v_2}$}, inner sep=2.5pt, fill=black, circle] at (11, 0)(v8){};

\node[label=above:{$\scriptstyle{q_\ell}$}, inner sep=2.5pt, fill=black, circle] at (12.33, 0.86)(v15){}; 
\node[label=above:{$\scriptstyle{q_1}$}, inner sep=2.5pt, fill=black, circle] at (13.67, 1.73)(v16){}; 

\node[label=above:{$\scriptstyle{d_2}$}, inner sep=2.5pt, fill=black, circle] at (15, 2.6)(v9){}; 
\node[label=below:{$\scriptstyle{d_1}$}, inner sep=2.5pt, fill=black, circle] at (15, -2.6)(v10){}; 
\node[label=above:{$\scriptstyle{d_3}$}, inner sep=2.5pt, fill=black, circle] at (17, 4)(v11){}; 
\node[label=below:{$\scriptstyle{d_m}$}, inner sep=2.5pt, fill=black, circle] at (17, -4)(v12){}; 

\draw[black, dotted, thick] (v1)  .. controls +(-6,0) and +(-6,0) .. (v2);
\draw[black, thick] (v1) -- (v3);
\draw[black, thick] (v2) -- (v4);
\draw[black, thick] (v3) -- (v4);
\draw[black, thick] (v4) -- (v5);
\draw[black, thick] (v5) -- (v6);
\draw[black, dotted, thick] (v6) -- (v7);
\draw[black, thick] (v7) -- (v8);
\draw[black, thick] (v8) -- (v10);
\draw[black, thick] (v9) -- (v10);
\draw[black, thick] (v9) -- (v11);
\draw[black, thick] (v10) -- (v12);
\draw[black, dotted, thick] (v11)  .. controls +(6,0) and +(6,0) .. (v12);

\draw[black, thick] (v3) -- (v13);
\draw[black, thick] (v5) -- (v14);
\draw[black, thick] (v4) -- (v13);
\draw[black, thick] (v4) -- (v14);
\draw[black, thick] (v4) -- (-0.66, 0);
\draw[black, dotted, thick] (v13) -- (v14);

\draw[black, thick] (v9) -- (v16);
\draw[black, thick] (v8) -- (v15);
\draw[black, thick] (v10) -- (v16);
\draw[black, thick] (v10) -- (v15);
\draw[black, thick] (v10) -- (13.67, 0);
\draw[black, dotted, thick] (v15) -- (v16);

\node at (-5,0) {\small{even}};
\node at (6.5, -1.5) {$\xleftarrow{\makebox[0.5cm]{}}$ {\small{odd}} $\xrightarrow{\makebox[0.5cm]{}}$};
\node at (18,0) {\small{even}};

\end{tikzpicture}
\hspace{0.4cm}
\begin{tikzpicture}[scale=0.125]

\node[inner sep=2.5pt, fill=black, circle] at (0, 0)(v){};

\node[inner sep=2.5pt, fill=black, circle] at (-1, -8)(cm){};
\node[inner sep=2.5pt, fill=black, circle] at (-1, 8)(a1){};

\node[inner sep=2.5pt, fill=black, circle] at (-7.4, 4)(ak){};
\node[inner sep=2.5pt, fill=black, circle] at (-7.4, -4)(c1){};

\node[inner sep=2.5pt, fill=white, circle] at (5, -11.5)(v21){}; 

\draw[black, dotted, thick] (a1)  .. controls +(-4,0) and +(0.6,2) .. (ak);
\draw[black, thick] (ak)  .. controls +(-1.1,-2) and +(-1.1,2) .. (c1);
\draw[black, dotted, thick] (c1)  .. controls +(0.6,-2) and +(-4,0) .. (cm);

\draw[black, thick] (v) -- (a1);
\draw[black, thick] (v) -- (cm);
\draw[black, thick] (v) -- (ak);
\draw[black, thick] (v) -- (c1);
\draw[black, thick] (v) -- (-5, 1.4);
\draw[black, thick] (v) -- (-5.5, 0.7);
\draw[black, thick] (v) -- (-6, 0);
\draw[black, thick] (v) -- (-5.5, -0.7);
\draw[black, thick] (v) -- (-5, -1.4);

\node[inner sep=2.5pt, fill=black, circle] at (25, 0)(v2){};

\node[inner sep=2.5pt, fill=black, circle] at (26, -8)(cm2){};
\node[inner sep=2.5pt, fill=black, circle] at (26, 8)(a12){};

\node[inner sep=2.5pt, fill=black, circle] at (32.4, 4)(ak2){};
\node[inner sep=2.5pt, fill=black, circle] at (32.4, -4)(c12){};

\draw[black, dotted, thick] (a12)  .. controls +(4,0) and +(-0.6,2) .. (ak2);
\draw[black, thick] (ak2)  .. controls +(1.1,-2) and +(1.1,2) .. (c12);
\draw[black, dotted, thick] (c12)  .. controls +(-0.6,-2) and +(4,0) .. (cm2);

\draw[black, thick] (v2) -- (a12);
\draw[black, thick] (v2) -- (cm2);
\draw[black, thick] (v2) -- (ak2);
\draw[black, thick] (v2) -- (c12);
\draw[black, thick] (v2) -- (30, 1.4);
\draw[black, thick] (v2) -- (30.5, 0.7);
\draw[black, thick] (v2) -- (31, 0);
\draw[black, thick] (v2) -- (30.5, -0.7);
\draw[black, thick] (v2) -- (30, -1.4);

\node[inner sep=2.5pt, fill=black, circle] at (5, 0)(v'){};
\node[inner sep=2.5pt, fill=black, circle] at (20, 0)(v2'){};

\draw[black, thick] (v) -- (v');
\draw[black, thick] (v2) -- (v2');
\draw[black, dotted, thick] (v') -- (v2');

\node at (12.5,-2) {$\xleftarrow{\makebox[0.85cm]{}}$ {\small{odd}} $\xrightarrow{\makebox[0.85cm]{}}$};

\end{tikzpicture}
\begin{tikzpicture}[scale=0.245]

\node[inner sep=2.5pt, fill=black, circle] at (-4, 4)(v1){}; 
\node[inner sep=2.5pt, fill=black, circle] at (-4, -4)(v2){}; 
\node[inner sep=2.5pt, fill=black, circle] at (-2, 2.6)(v3){}; 
\node[inner sep=2.5pt, fill=black, circle] at (-2, -2.6)(v4){};

\node[inner sep=2.5pt, fill=black, circle] at (-0.67, 1.73)(v13){}; 
\node[inner sep=2.5pt, fill=black, circle] at (0.67, 0.86)(v14){}; 

\node[inner sep=2.5pt, fill=black, circle] at (2, 0)(v5){}; 
\node[inner sep=2.5pt, fill=black, circle] at (5, 0)(v6){}; 
\node[inner sep=2.5pt, fill=black, circle] at (8, 0)(v7){};

\draw[black, dotted, thick] (v1)  .. controls +(-6,0) and +(-6,0) .. (v2);
\draw[black, thick] (v1) -- (v3);
\draw[black, thick] (v2) -- (v4);
\draw[black, thick] (v3) -- (v4);
\draw[black, thick] (v4) -- (v5);
\draw[black, thick] (v5) -- (v6);
\draw[black, dotted, thick] (v6) -- (v7);

\draw[black, thick] (v3) -- (v13);
\draw[black, thick] (v5) -- (v14);
\draw[black, thick] (v4) -- (v13);
\draw[black, thick] (v4) -- (v14);
\draw[black, thick] (v4) -- (-0.66, 0);
\draw[black, dotted, thick] (v13) -- (v14);

\node at (-5,0) {\small{even}};
\node at (6.5, -1.5) {$\xleftarrow{\makebox[0.4cm]{}}$ {\small{even}} $\xrightarrow{\makebox[0.4cm]{}}$};

\node[inner sep=2.5pt, fill=black, circle] at (11, 0)(v8){};

\node[inner sep=2.5pt, fill=black, circle] at (12, -4)(cm2){};
\node[inner sep=2.5pt, fill=black, circle] at (12, 4)(a12){};

\node[inner sep=2.5pt, fill=black, circle] at (15.6, 2.7)(ak2){};
\node[inner sep=2.5pt, fill=black, circle] at (15.6, -2.7)(c12){};

\draw[black, thick] (v7) -- (v8);

\draw[black, dotted, thick] (a12)  .. controls +(1,0) and +(-1,1) .. (ak2);
\draw[black, thick] (ak2)  .. controls +(1.3,-1.7) and +(1.3, 1.7) .. (c12);
\draw[black, dotted, thick] (c12)  .. controls +(-1,-1) and +(1,0) .. (cm2);

\draw[black, thick] (v8) -- (a12);
\draw[black, thick] (v8) -- (cm2);
\draw[black, thick] (v8) -- (ak2);
\draw[black, thick] (v8) -- (c12);
\draw[black, thick] (v8) -- (14, 1.1);
\draw[black, thick] (v8) -- (14.5, 0.55);
\draw[black, thick] (v8) -- (15, 0);
\draw[black, thick] (v8) -- (14.5, -0.55);
\draw[black, thick] (v8) -- (14, -1.1);

\end{tikzpicture}
\end{center}
\vspace{-0.4cm}
\caption{New minimal non-strongly-perfect graphs}
\label{fig:new_mnsp_graphs}
\end{figure}
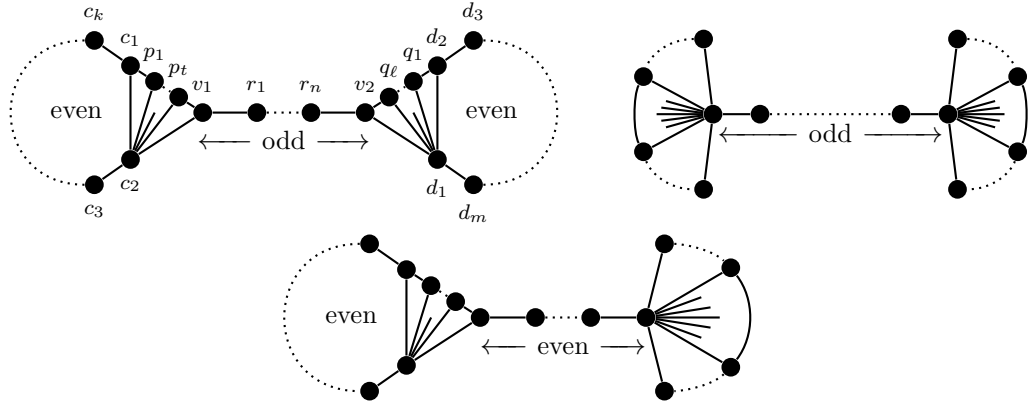

\begin{proposition}
The graphs in Figure \ref{fig:new_mnsp_graphs} are minimal non-strongly-perfect.
\label{prop:new_desirable}
\end{proposition}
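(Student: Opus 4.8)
The plan is to apply the strong-basis criterion developed in Section~\ref{sec:preliminaries}: for each of the three graphs $G$ it suffices to prove that (a) $G$ has a strong basis, and (b) $G$ has no strong stable set, since by the discussion following Lemma~\ref{lem:no_simplicial_in_minimal} this already gives that $G$ is minimal non-strongly-perfect. In all three cases $G$ is built from two gadgets $G_1,G_2$ (each a pupa or a butterfly) with heads $v_1,v_2$, joined by an internally disjoint path $R=v_1\dd r_1\dd\cdots\dd r_n\dd v_2$ whose interior $\{r_1,\dots,r_n\}$ is anticomplete to $(V(G_1)\cup V(G_2))\setminus\{v_1,v_2\}$. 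Recall that the head of a pupa is undesirable and the head of a butterfly is desirable (Observation~\ref{obs:desirable_undesirable}), and note that the two pupas, and likewise the two butterflies, are joined by an odd path, while the pupa and the butterfly are joined by an even path.

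For (b), suppose $G$ had a strong stable set $S$. Applying Lemma~\ref{lem:clique_cutset_sss} with $K=\{v_1\}$, $A=V(G_1)\setminus\{v_1\}$ and $B=V(G)\setminus V(G_1)$ (here $v_1$ has a neighbour in $A$, so the first alternative of the lemma applies, and $A$ is anticomplete to $B$) shows that $S\cap V(G_1)$ is a strong stable set of $G_1$, and symmetrically for $G_2$. Hence the membership of $v_i$ in $S$ is dictated by whether $v_i$ is wanted or unwanted in $G_i$: $v_i\notin S$ when $G_i$ is a pupa and $v_i\in S$ when $G_i$ is a butterfly. On the other hand, every edge of $R$ is a maximal clique of $G$, so $S\cap V(R)$ is a stable set meeting each edge of the path, which forces it to alternate; an easy parity count then gives that exactly one of $v_1,v_2$ lies in $S$ when $R$ is odd, and both or neither lie in $S$ when $R$ is even. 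In each of the three graphs this contradicts the forced memberships of the heads (two undesirable heads with an odd path, two desirable heads with an odd path, or one of each with an even path), so no such $S$ exists.

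For (a), I would first record the structural fact that every interior vertex $r_i$ of $R$ has in $G$ exactly the two non-adjacent neighbours $r_{i-1},r_{i+1}$. Consequently, if a proper induced subgraph $H=G\setminus Z$ has no simplicial vertex, then no surviving $r_i$ may lose a neighbour, and a short cascade argument shows that either (i) $H$ contains no interior vertex of $R$, or (ii) $H$ contains all of $R$, and in particular both heads (the length-one case, where the connecting edge $v_1v_2$ itself is the clique cutset, is handled analogously). In case (i), $H$ is the disjoint union of an induced subgraph of $G_1$ and an induced subgraph of $G_2$; since pupas and butterflies are strongly perfect, each piece is strongly perfect and hence so is $H$. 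In case (ii) we have $Z\subseteq (V(G_1)\cup V(G_2))\setminus\{v_1,v_2\}$ with $Z\neq\varnothing$, and I would build a strong stable set of $H$ directly: the maximal cliques of $H$ are those of the truncated gadgets $H[V(G_i)\cap V(H)]$ together with the edges of $R$, so it suffices to choose strong stable sets of the two truncated gadgets together with an alternating pattern on $R$ whose end-values at $v_1,v_2$ satisfy the parity relation above. The crux is that if $Z$ meets $V(G_i)$, then $v_i$ is \emph{not forced} in the truncated gadget, so its membership can be chosen freely to meet the parity constraint, while an untouched gadget (if any) supplies a fixed value that the free gadget can then match.

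The main obstacle is exactly this last point. The definition of desirable/undesirable only guarantees non-forcedness of $v_i$ after deleting a \emph{single} vertex of $G_i$, whereas $Z$ may delete several vertices of a gadget without producing a simplicial vertex in $H$ (for instance, in a pupa one may delete two consecutive interior vertices of the $p$-path and leave no simplicial vertex in $H$, even though $v_i$ becomes simplicial inside the truncated gadget). I would therefore prove the strengthening that for every induced subgraph $F_i$ of a pupa (resp.\ a butterfly) containing the head $v_i$ in which no vertex other than possibly $v_i$ itself is simplicial, the head $v_i$ is not forced in $F_i$. This is where the precise even-hole-based structure of the gadgets is essential, and it is precisely what makes these particular gadgets work: were some admissible truncation to force the head, the corresponding $H$ would have no strong stable set and $G$ would fail to be minimal non-strongly-perfect. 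The remaining ingredients—the clique-cutset reduction, the parity count, and the disjoint-union case—are routine.
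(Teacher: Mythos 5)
Your first half---showing that $G$ has no strong stable set via Lemma \ref{lem:clique_cutset_sss}, Observation \ref{obs:desirable_undesirable}, and the parity of the connecting path---is exactly the paper's argument and is fine, as is your case (i), where $H$ misses the interior of $R$ and splits into induced subgraphs of the two gadgets. The gap is in case (ii) of the strong-basis half: you rest the entire argument on the claim that the head of a pupa or butterfly is not forced in every ``admissible truncation'' of the gadget, and you explicitly defer its proof. Since that claim is the whole remaining content of the argument, the proof is incomplete as written. (Your motivating example for why the claim is delicate is also incorrect: deleting two consecutive interior vertices $p_i,p_{i+1}$ of the $p$-path of a pupa leaves $p_{i-1}$ with neighbourhood $\{p_{i-2},c_2\}$, which is a clique because $c_2$ is complete to the $p$-path, so $p_{i-1}$ is simplicial and the deletion cascades.)

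In fact the cascade you invoke is much stronger than you use, and exploiting it fully is how the paper avoids any non-forcedness lemma. In the graph built from two pupas, deleting any vertex of the even hole $C$ forces the deletion of all of $C$, then of $v_1$ and the connecting path, and so on; deleting any $p_i$ forces the deletion of all of $P$ but nothing else; and similarly for $R$ and $Q$. Consequently $\mathcal{B}(G)$ consists, up to isomorphism, of only four graphs: $A_1$, $2A_1$, $J = G[V(C)\cup V(R)\cup V(D)]$ and $F = G[V(J)\cup V(Q)]$. The paper then simply exhibits an explicit strong stable set in each (the set $\{c_1,c_3,\dots,c_{k-1},v_1,r_2,r_4,\dots,r_n,d_1,d_3,\dots,d_{m-1}\}$ works for both $J$ and $F$) and observes that the bases of $J$ and $F$ are again among these graphs. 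So instead of proving a general statement about all truncations of a gadget, you should push the cascade to its conclusion, list the finitely many members of $\mathcal{B}(G)$, and give a strong stable set for each; the same applies to the other two graphs in Figure \ref{fig:new_mnsp_graphs}.
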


\begin{proof}
Let $G$ be the graph obtained by connecting the heads $v_1, v_2$ of two pupas $C_1, C_2$ via an odd path $R = v_1 \dd r_1 \dd r_2 \dd \dots \dd r_n \dd v_2$, as in Figure \ref{fig:new_mnsp_graphs}. We first show that $G$ does not have a strong stable set. Assume for a contradiction that $S$ is a strong stable set in $G$. For $i = 1, 2$, since $v_i$ is a clique cutset satisfying the assumption of Lemma \ref{lem:clique_cutset_sss}, by Observation \ref{obs:desirable_undesirable} and Lemma \ref{lem:clique_cutset_sss}, it follows that $v_1, v_2 \notin S$. Since $\{v_1, r_1\}$, $\{r_1, r_2\}$, \dots, $\{r_{n-1}, r_n\}$ are all maximal cliques of $G$, we have $r_1, r_3, \dots, r_{n-1} \in S$. Then, $r_n \notin S$, a contradiction since $r_n, v_2 \notin S$ but $\{r_n, v_2\}$ is a maximal clique in $G$.

Next, we show that $G$ has a strong basis. Let $C = c_1 \dd c_2 \dd \dots \dd c_k \dd c_1$ and $D = d_1 \dd d_2 \dd \dots \dd d_m \dd d_1$ denote the even holes in $G$, and let $P = p_1 \dd \dots \dd p_t$, $Q = q_1 \dd \dots \dd q_\ell$, and $R = v_1 \dd r_1 \dd \dots \dd r_n \dd v_2$ denote the odd paths in $G$. Then, the graphs $J = G[V(C) \cup V(R) \cup V(D)]$ and $F = G[V(J) \cup V(Q)]$ are strongly perfect since the set $\{c_1, c_3, \dots, c_{k-1}, v_1, r_2, r_4, \dots, r_n, d_1, d_3, \dots, d_{m-1}\}$ is a strong stable set in both $J$ and $F$, and $J$ and $F$ have a strong basis as $\mathcal{B}(J) = \{A_1, 2A_1\}$, and $\mathcal{B}(F) = \{A_1, 2A_1, J\}$. Now, $G$ has a strong basis since $\mathcal{B}(G) = \{A_1, 2A_1, J, F\}$, hence $G$ is minimal non-strongly-perfect. The proof is similar for the other two graphs in Figure \ref{fig:new_mnsp_graphs}, and we leave it to the reader to check.
\end{proof}

\subsection{Evolution of a larva} \label{subsec:evolution}
\setcounter{equation}{0}

\emph{Subdividing} an edge $uv$ means deleting the edge $uv$, adding a new vertex $w$, and adding two new edges $uw$ and $wv$. Let $D$ be a larva with vertices labeled as in Figure \ref{fig:evolution}. \emph{Evolution} is the following operation: subdivide a side edge of $D$, say $vc_1$, an even number of times (i.e., replace $vc_1$ with an odd path of length at least three from $v$ to $c_1$), and make $c_2$ complete to the new vertices. So, a pupa $T$ is obtained from a larva $D$ by evolution. We say that $T$ is obtained from $D$ by \emph{evolving} the side edge $vc_1$. 

Let $H$ be a minimal non-strongly-perfect graph that contains a larva $D$. In $H$, we would like to evolve a side edge of $D$ with the hope of obtaining a new minimal non-strongly-perfect graph $G$. When applicable, we allow several applications of evolution to different side edges of different larvas in $H$. We say that a graph $G$ is an \emph{emanation} of $H$ if $G$ can be obtained from $H$ in this way. For instance, the graph obtained by connecting two pupas via an odd path as in Figure \ref{fig:new_mnsp_graphs} is an emanation of Graph II shown in Figure \ref{fig:minimal_non_strongly_perfect}. We show that emanations of Graphs III, V, and VI, shown in Figure \ref{fig:new_MNSP_evolve}, yield new minimal non-strongly-perfect graphs.

\begin{proposition}
The graphs in Figure \ref{fig:new_MNSP_evolve} are minimal non-strongly-perfect.
\label{prop:new_evolved_ones}
\end{proposition}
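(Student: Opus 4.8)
The plan is to mimic the proof of Proposition~\ref{prop:new_desirable}. For each graph $G$ in Figure~\ref{fig:new_MNSP_evolve} I would establish two things: (i) $G$ has no strong stable set, and (ii) $G$ has a strong basis. By the remark following Lemma~\ref{lem:no_simplicial_in_minimal}, a graph with a strong basis and no strong stable set is minimal non-strongly-perfect, so (i) and (ii) together finish the proof. The feature driving both parts is that each $G$ is an emanation of Graph~III, V or VI, so every larva that was evolved is now a pupa; by Observation~\ref{obs:desirable_undesirable} the head of each such pupa is undesirable, hence unwanted.

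For part (i), let $S$ be a strong stable set of $G$, and consider any pupa $C$ with head $v$. I would produce a clique cutset $K$ of $G$, with an associated partition $(A,B,K)$ of $V(G)$ in which $A$ is anticomplete to $B$ and $G[A\cup K]=C$: when $v$ is a cutvertex (as along a connecting path) one takes $K=\{v\}$, while in Graphs~V and VI, where an extra edge keeps a side vertex $c_2$ adjacent to the far side, one takes a two-vertex side clique such as $K=\{v,c_2\}$ so as to sever that edge as well. In each case no vertex of $B$ is complete to $K$, so Lemma~\ref{lem:clique_cutset_sss} gives that $S\cap V(C)$ is a strong stable set of the pupa $C$; since $v$ is unwanted in $C$, we get $v\notin S$. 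With all pupa heads excluded, I would then run the parity count: tracing the maximal cliques around each even hole and along each odd path, the alternation forced on $S$ terminates at an edge both of whose ends lie outside $S$, contradicting that this edge is a maximal clique. Evolution only inserts an odd internal path together with the clique edges from $c_2$, and since the head stays unwanted it creates no maximal clique letting $S$ escape; thus the count is essentially the one in Proposition~\ref{prop:new_desirable}.

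For part (ii), I would compute $\mathcal{B}(G)$ explicitly. Deleting a single vertex opens one of the even holes or paths into a piece with an end of degree at most one --- a simplicial vertex --- unless the deletion is ``clean,'' meaning it splits $G$ at a clique cutset into pieces that are again even holes, pupas, or assembled graphs of the $J,F$-type from Proposition~\ref{prop:new_desirable}. Iterating, every member of $\mathcal{B}(G)$ is a disjoint union of graphs each isomorphic to one of $A_1,\dots,A_6$ of Figure~\ref{fig:graphs_A_1234567} or to one of finitely many assembled graphs that carry an explicit strong stable set (alternate vertices of each even hole together with the appropriate vertices of each odd path) and themselves have a strong basis by the same recursion. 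All of these are strongly perfect, so $G$ has a strong basis.

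The step I expect to be the main obstacle is the case analysis in part (ii) for Graphs~V and VI. These carry more even holes and more pupa heads than the pupa--pupa graph of Proposition~\ref{prop:new_desirable}, and their pieces attach through two-vertex cliques rather than through single heads; so one must both choose the correct clique cutsets for Lemma~\ref{lem:clique_cutset_sss} in part (i) and verify, by a finite but delicate enumeration, that no partial deletion yields a graph that simultaneously lacks a simplicial vertex and fails to be strongly perfect. Once $\mathcal{B}(G)$ is pinned down and each of its members is matched to an $A_i$ or to an already-verified assembled graph, the conclusion is immediate.
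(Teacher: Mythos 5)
Your global architecture (show $G$ has no strong stable set and has a strong basis, then invoke the remark after Lemma~\ref{lem:no_simplicial_in_minimal}) is the paper's, and your part~(ii) is essentially what the paper does: it lists $\mathcal{B}(G_i)$ explicitly in terms of $A_1,\dots,A_6$, a butterfly, and a few assembled intermediate graphs each given an explicit strong stable set. Your part~(i) also matches the paper for the \emph{middle} graph of Figure~\ref{fig:new_MNSP_evolve} (the emanation of Graph~V), where $\{v_1,c_2\}$ and $\{v_2,d_1\}$ really are clique cutsets satisfying the hypothesis of Lemma~\ref{lem:clique_cutset_sss}, so the pupa heads $v_1,v_2$ are excluded and the parity count along $R$ finishes.

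However, your part~(i) has a genuine gap for the first and third graphs. Your plan is to cut off \emph{every} pupa by a clique cutset and conclude via Observation~\ref{obs:desirable_undesirable} that every pupa head avoids $S$. In the emanation of Graph~III this is impossible: the pupa on $V(C)\cup\{p_1,\dots,p_t\}\cup\{d_1\}$ attaches to the rest of the graph through $c_1,c_2,d_1$ \emph{and} all of $p_1,\dots,p_t$ (since $d_2$ is complete to the subdivided path and adjacent to $c_1,c_2,d_1$), and this attachment set is not a clique, so no clique cutset isolates the pupa; moreover maximal cliques of the pupa such as $\{c_2,p_1,p_2\}$ are absorbed into the larger clique $\{c_2,d_2,p_1,p_2\}$ of $G_1$, so the restriction of $S$ to the pupa need not be a strong stable set of the pupa. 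The intended conclusion is in fact the opposite of what holds: the paper shows that the two pupa heads $c_1$ and $d_1$ must both \emph{belong} to any strong stable set $S_1$. In the emanation of Graph~VI the natural cutset $\{c_2,d_1\}$ does separate off a pupa, but $q_1$ is complete to it, so Lemma~\ref{lem:clique_cutset_sss} as stated does not apply, and again the paper's argument places one of $d_1,c_2$ inside $S_3$. What the paper does instead for these two graphs is prove a direct parity claim — for each even hole, running the forced alternation around the maximal cliques $\{c_i,c_{i+1}\}$ shows that exactly one of its two attachment vertices lies in $S$ — and then derives the contradiction by alternating along the subdivided path using the maximal cliques $\{p_i,p_{i+1},c_2,d_2\}$ (resp.\ $\{p_i,p_{i+1},c_2\}$), which is odd and hence cannot be consistently alternated. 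You need this hole-by-hole parity claim in place of the pupa-head exclusion for those two graphs; as written, your argument does not go through for them.
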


\begin{proof}
Let $G_1, G_2, G_3$ be the graphs shown in Figure \ref{fig:new_MNSP_evolve}, from left to right respectively. Note that in $G_3$, we have $e_1 = c_2$, $e_2 = d_1$, and we label the vertices that are not labeled in $G_3$ as follows. Let the odd path from $c_1$ to $d_1$ be $c_1 \dd p_1 \dd \dots \dd p_t \dd d_1$, and let the odd path from $c_2$ to $d_2$ be $c_2 \dd q_1 \dd \dots \dd q_\ell \dd d_2$.

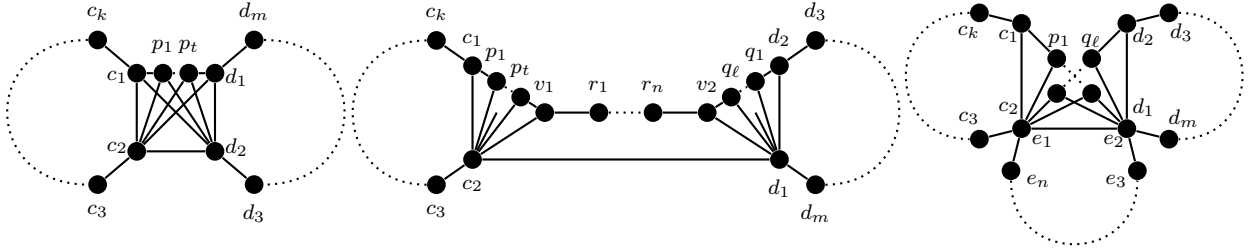
\begin{figure}[h]
\begin{center}
\begin{tikzpicture}[scale=0.26]

\node[label=above:{$\scriptstyle{c_k}$}, inner sep=2.5pt, fill=black, circle] at (-4, 3.7)(v1){}; 
\node[label=below:{$\scriptstyle{c_3}$}, inner sep=2.5pt, fill=black, circle] at (-4, -3.7)(v2){}; 
\node[inner sep=2.5pt, fill=black, circle] at (-2, 2)(v3){}; 
\node[inner sep=2.5pt, fill=black, circle] at (-2, -2)(v4){};

\node at (-3,1.8) {$\scriptstyle{c_1}$};
\node at (-3,-1.8) {$\scriptstyle{c_2}$};

\node[inner sep=2.5pt, fill=black, circle] at (2, 2)(v9){}; 
\node[inner sep=2.5pt, fill=black, circle] at (2, -2)(v10){}; 
\node[label=above:{$\scriptstyle{d_m}$}, inner sep=2.5pt, fill=black, circle] at (4, 3.7)(v11){}; 
\node[label=below:{$\scriptstyle{d_3}$}, inner sep=2.5pt, fill=black, circle] at (4, -3.7)(v12){}; 

\node at (3.1,1.8) {$\scriptstyle{d_1}$};
\node at (3.1,-1.8) {$\scriptstyle{d_2}$};

\node[label=above:{$\scriptstyle{p_1}$}, inner sep=2.5pt, fill=black, circle] at (-0.66, 2)(v5){}; 
\node[label=above:{$\scriptstyle{p_t}$}, inner sep=2.5pt, fill=black, circle] at (0.66, 2)(v6){};

\node[inner sep=2.5pt, fill=white, circle] at (0, -7.4)(v21){}; 

\draw[black, dotted, thick] (v1)  .. controls +(-6,0) and +(-6,0) .. (v2);
\draw[black, thick] (v1) -- (v3);
\draw[black, thick] (v2) -- (v4);
\draw[black, thick] (v3) -- (v4);
\draw[black, thick] (v3) -- (v10);
\draw[black, thick] (v4) -- (v9);
\draw[black, thick] (v4) -- (v10);
\draw[black, thick] (v9) -- (v10);
\draw[black, thick] (v9) -- (v11);
\draw[black, thick] (v10) -- (v12);
\draw[black, dotted, thick] (v11)  .. controls +(6,0) and +(6,0) .. (v12);

\draw[black, thick] (v3) -- (v5);
\draw[black, thick] (v6) -- (v9);
\draw[black, dotted, thick] (v5) -- (v6);
\draw[black, thick] (v4) -- (v5);
\draw[black, thick] (v4) -- (v6);
\draw[black, thick] (v10) -- (v5);
\draw[black, thick] (v10) -- (v6);

\end{tikzpicture}
\hspace{-0.85cm}
\begin{tikzpicture}[scale=0.24]

\node[label=above:{$\scriptstyle{c_k}$}, inner sep=2.5pt, fill=black, circle] at (-4, 4)(v1){}; 
\node[label=below:{$\scriptstyle{c_3}$}, inner sep=2.5pt, fill=black, circle] at (-4, -4)(v2){}; 
\node[label=above:{$\scriptstyle{c_1}$}, inner sep=2.5pt, fill=black, circle] at (-2, 2.6)(v3){}; 
\node[label=below:{$\scriptstyle{c_2}$}, inner sep=2.5pt, fill=black, circle] at (-2, -2.6)(v4){};

\node[label=above:{$\scriptstyle{p_1}$}, inner sep=2.5pt, fill=black, circle] at (-0.67, 1.73)(v13){}; 
\node[label=above:{$\scriptstyle{p_t}$}, inner sep=2.5pt, fill=black, circle] at (0.67, 0.86)(v14){}; 

\node[label=above:{$\scriptstyle{v_1}$}, inner sep=2.5pt, fill=black, circle] at (2, 0)(v5){}; 
\node[label=above:{$\scriptstyle{r_1}$}, inner sep=2.5pt, fill=black, circle] at (5, 0)(v6){}; 
\node[label=above:{$\scriptstyle{r_n}$}, inner sep=2.5pt, fill=black, circle] at (8, 0)(v7){}; 
\node[label=above:{$\scriptstyle{v_2}$}, inner sep=2.5pt, fill=black, circle] at (11, 0)(v8){};

\node[label=above:{$\scriptstyle{q_\ell}$}, inner sep=2.5pt, fill=black, circle] at (12.33, 0.86)(v15){}; 
\node[label=above:{$\scriptstyle{q_1}$}, inner sep=2.5pt, fill=black, circle] at (13.67, 1.73)(v16){}; 

\node[label=above:{$\scriptstyle{d_2}$}, inner sep=2.5pt, fill=black, circle] at (15, 2.6)(v9){}; 
\node[label=below:{$\scriptstyle{d_1}$}, inner sep=2.5pt, fill=black, circle] at (15, -2.6)(v10){}; 
\node[label=above:{$\scriptstyle{d_3}$}, inner sep=2.5pt, fill=black, circle] at (17, 4)(v11){}; 
\node[label=below:{$\scriptstyle{d_m}$}, inner sep=2.5pt, fill=black, circle] at (17, -4)(v12){}; 

\node[inner sep=2.5pt, fill=white, circle] at (0, -8)(v21){}; 

\draw[black, dotted, thick] (v1)  .. controls +(-6,0) and +(-6,0) .. (v2);
\draw[black, thick] (v1) -- (v3);
\draw[black, thick] (v2) -- (v4);
\draw[black, thick] (v3) -- (v4);
\draw[black, thick] (v4) -- (v5);
\draw[black, thick] (v5) -- (v6);
\draw[black, dotted, thick] (v6) -- (v7);
\draw[black, thick] (v7) -- (v8);
\draw[black, thick] (v8) -- (v10);
\draw[black, thick] (v9) -- (v10);
\draw[black, thick] (v9) -- (v11);
\draw[black, thick] (v10) -- (v12);
\draw[black, dotted, thick] (v11)  .. controls +(6,0) and +(6,0) .. (v12);

\draw[black, thick] (v3) -- (v13);
\draw[black, thick] (v5) -- (v14);
\draw[black, thick] (v4) -- (v13);
\draw[black, thick] (v4) -- (v14);
\draw[black, thick] (v4) -- (-0.66, 0);
\draw[black, dotted, thick] (v13) -- (v14);

\draw[black, thick] (v9) -- (v16);
\draw[black, thick] (v8) -- (v15);
\draw[black, thick] (v10) -- (v16);
\draw[black, thick] (v10) -- (v15);
\draw[black, thick] (v10) -- (13.67, 0);
\draw[black, dotted, thick] (v15) -- (v16);

\draw[black, thick] (v4) -- (v10);

\end{tikzpicture}
\hspace{-0.8cm}
\begin{tikzpicture}[scale=0.28]

\node[inner sep=2.5pt, fill=black, circle] at (-4, 2.5)(v1){}; 
\node[inner sep=2.5pt, fill=black, circle] at (-4, -3.5)(v2){}; 
\node[inner sep=2.5pt, fill=black, circle] at (-2, 2)(v3){}; 
\node[inner sep=2.5pt, fill=black, circle] at (-2, -3)(v4){};

\node at (-4.5,1.6) {$\scriptstyle{c_k}$};
\node at (-4.5,-2.5) {$\scriptstyle{c_3}$};
\node at (-2.6,1.4) {$\scriptstyle{c_1}$};
\node at (-2.6,-2) {$\scriptstyle{c_2}$};

\node[inner sep=2.5pt, fill=black, circle] at (3, 2)(v9){}; 
\node[inner sep=2.5pt, fill=black, circle] at (3, -3)(v10){}; 
\node[inner sep=2.5pt, fill=black, circle] at (5, 2.5)(v11){}; 
\node[inner sep=2.5pt, fill=black, circle] at (5, -3.5)(v12){}; 

\node at (5.6,1.6) {$\scriptstyle{d_3}$};
\node at (5.7,-2.5) {$\scriptstyle{d_m}$};
\node at (3.8,1.4) {$\scriptstyle{d_2}$};
\node at (3.8,-2) {$\scriptstyle{d_1}$};

\node[inner sep=2.5pt, fill=black, circle] at (-2.5, -5)(v13){}; 
\node[inner sep=2.5pt, fill=black, circle] at (3.5, -5)(v14){};

\node at (-1,-3.6) {$\scriptstyle{e_1}$};
\node at (2.4, -3.6) {$\scriptstyle{e_2}$};
\node at (2.5,-5.5) {$\scriptstyle{e_3}$};
\node at (-1.2,-5.5) {$\scriptstyle{e_n}$};

\node at (-0.2,1.2) {$\scriptstyle{p_1}$};
\node at (1.3,1.2) {$\scriptstyle{q_\ell}$};

\node[inner sep=2.5pt, fill=black, circle] at (-0.33, 0.33)(v5){}; 
\node[inner sep=2.5pt, fill=black, circle] at (1.33, -1.33)(v6){};
 
\node[inner sep=2.5pt, fill=black, circle] at (-0.33, -1.33)(v7){}; 
\node[inner sep=2.5pt, fill=black, circle] at (1.33, 0.33)(v8){}; 

\draw[black, dotted, thick] (v1)  .. controls +(-4.5,0) and +(-4.5,0) .. (v2);
\draw[black, thick] (v1) -- (v3);
\draw[black, thick] (v2) -- (v4);
\draw[black, thick] (v3) -- (v4);
\draw[black, thick] (v4) -- (v10);
\draw[black, thick] (v9) -- (v10);
\draw[black, thick] (v9) -- (v11);
\draw[black, thick] (v10) -- (v12);
\draw[black, thick] (v4) -- (v13);
\draw[black, thick] (v10) -- (v14);
\draw[black, dotted, thick] (v11)  .. controls +(4.5,0) and +(4.5,0) .. (v12);

\draw[black, dotted, thick] (v13)  .. controls +(0,-4.5) and + (0,-4.5) .. (v14);

\draw[black, thick] (v3) -- (v5);
\draw[black, thick] (v6) -- (v10);
\draw[black, dotted, thick] (v5) -- (v6);
\draw[black, thick] (v4) -- (v5);
\draw[black, thick] (v4) -- (v6);

\draw[black, thick] (v9) -- (v8);
\draw[black, thick] (v4) -- (v7);
\draw[black, dotted, thick] (v7) -- (v8);
\draw[black, thick] (v10) -- (v7);
\draw[black, thick] (v10) -- (v8);

\end{tikzpicture}
\end{center}
\vspace{-1cm}
\caption{New minimal non-strongly-perfect graphs}
\label{fig:new_MNSP_evolve}
\end{figure}

For $i=1,2,3$, we first show that $G_i$ does not have a strong stable set. Assume for a contradiction that $G_i$ has a strong stable set $S_i$.
\begin{equation}
\longbox{{\it For $i=1,2,3$, exactly one of $c_1, c_2$ is in $S_i$, exactly one of $d_1, d_2$ is in $S_i$, and exactly one of $e_1, e_2$ is in $S_3$.}}
\label{eq:even_cycles_problem}
\end{equation}

\begin{proof}
We prove that exactly one of $c_1, c_2$ is in $S_1$. The proof is the same for the other assertions. Clearly, not both $c_1$ and $c_2$ is in $S_1$ as $c_1$ is adjacent to $c_2$. Assume $c_1, c_2 \notin S_1$. Since $\{c_2, c_3\}$, $\{c_3, c_4\}$, $\dots$, $\{c_{k-1}, c_k\}$ are all maximal cliques of $G_1$, we have $c_3, c_5, \dots, c_{k-1} \in S_1$. Then $c_k \notin S_1$, a contradiction since $c_1, c_k \notin S_1$ but $\{c_1, c_k\}$ is a maximal clique of $G_1$.
\end{proof}

In $G_1$, by \eqref{eq:even_cycles_problem}, we have $c_1, d_1 \in S_1$. Thus, $p_1, p_t, c_2, d_2 \notin S_1$. As $\{p_1, p_2, c_2, d_2\}$ is a maximal clique of $G_1$, we deduce that $p_2, \in S_1$, and so, $p_3 \notin S_1$. Since $\{p_3, p_4, c_2, d_2\}$ is a maximal clique of $G_1$, we have $p_4, \in S_1$, and so, $p_5 \notin S_1$. Continuing this argument along the path $p_1 \dd \dots \dd p_t$, it follows that $p_2, p_4, \dots, p_{t-2} \in S_1$. Then $p_{t-1} \notin S_1$, a contradiction since $S_1 \cap \{p_{t-1}, p_t, c_2, d_2\} = \emptyset$.

In $G_2$, since $\{v_1, c_2\}$ and $\{v_2, d_1\}$ are clique cutsets, by Lemma \ref{lem:clique_cutset_sss} and Observation \ref{obs:desirable_undesirable}, we have $v_1, v_2 \notin S_2$. As $\{v_1, r_1\}$, $\{r_1, r_2\}$, \dots, $\{r_{n-1}, r_n\}$ are all maximal cliques of $G_2$, we have $r_1, r_3, \dots, r_{n-1} \in S_2$. Then, $r_n \notin S_2$, a contradiction since $r_n, v_2 \notin S_2$ but $\{r_n, v_2\}$ is a maximal clique in $G_2$.

In $G_3$, by \eqref{eq:even_cycles_problem}, we deduce that either $c_1, d_1 \in S_3$, or $c_2, d_2 \in S_3$. By symmetry, we assume $c_1, d_1 \in S_3$. Thus, $p_1, p_t, c_2 \notin S_3$. Then, since $\{p_1, p_2, c_2\}$ and $\{p_t, p_{t-1}, c_2\}$ are maximal cliques of $G_3$, we have $p_2, p_{t-1} \in S_3$. So, $p_3, p_{t-2} \notin S_3$. Then, since $\{p_3, p_4, c_2\}$ and $\{p_{t-2}, p_{t-3}, c_2\}$ are maximal cliques of $G_3$, we have $p_4, p_{t-3} \in S_3$. Continuing this argument along the path $p_1 \dd \dots \dd p_t$, we reach a contradiction since the path $p_1 \dd \dots \dd p_t$ is odd. This completes the proof that $G_i$ does not have a strong stable set.

Next, for $i=1,2,3$, we show that the graph $G_i$ has a strong basis. The graph $G_1$ has a strong basis since $\mathcal{B}(G_1) = \{A_1, A_2\}$. Let $C = c_1 \dd c_2 \dd \dots \dd c_k \dd c_1$ and $D = d_1 \dd d_2 \dd \dots \dd d_m \dd d_1$ denote the even holes in $G_2$, and let $P = p_1 \dd \dots \dd p_t$, $Q = q_1 \dd \dots \dd q_\ell$, and $R = v_1 \dd r_1 \dd \dots \dd r_n \dd v_2$ denote the odd paths in $G_2$. Notice that $T = G_2[V(D) \cup V(Q) \cup V(R) \cup \{c_2\}]$ is a butterfly in $G_2$. Moreover, the graph $F = G_2[V(T) \cup V(C)]$ is strongly perfect because the set $\{d_1, d_3, \dots, d_{m-1}, v_1, r_2, r_4, \dots, r_n, c_1, c_3, \dots, c_{k-1}\}$ is a strong stable set in $F$, and $F$ has a strong basis since $\mathcal{B}(F) = \{A_1, A_3, A_4, A_5, T\}$. Now, $G_2$ has a strong basis since $\mathcal{B}(G_2) = \{A_1, A_3, A_4, A_5, T, F\}$.

Let $C = c_1 \dd c_2 \dd \dots \dd c_k \dd c_1$, $D = d_1 \dd d_2 \dd \dots \dd d_m \dd d_1$, and $E = e_1 \dd e_2 \dd \dots \dd e_n \dd e_1$ denote the even holes in $G_3$, and let $P = p_1 \dd \dots \dd p_t$ and $Q = q_1 \dd \dots \dd q_\ell$ denote the odd paths in $G_3$. Notice that the graph $T = G_3[V(D) \cup V(E) \cup V(Q)]$ is a butterfly in $G_3$.
\begin{equation}
\longbox{{\it $H = G_3[V(T) \cup V(C)]$ and $J = G_3[V(C) \cup V(D) \cup V(P) \cup V(Q)]$ are strongly perfect.}}
\label{eq:G3_F_H_str_per}
\end{equation}

\begin{proof}
The set $\{d_1, d_3, \dots, d_{m-1}, e_4, e_6, \dots, e_n, c_1, c_3, \dots, c_{k-1}\}$ is a strong stable set in $H$, and $H$ has a strong basis since $\mathcal{B}(H) = \{A_1, A_3, A_4, A_5, A_6, T\}$. Therefore, $H$ is strongly perfect. The set $\{c_1, c_3, \dots, c_{k-1}, p_2, p_4, \dots, p_t, q_1, q_3, \dots, q_{\ell-1}, d_2, d_4, \dots, d_m\}$ is a strong stable set in $J$, and $J$ has a strong basis since $\mathcal{B}(J) = \{A_1, A_3, A_6\}$. Hence, $J$ is strongly perfect.
\end{proof}

Now, $G_3$ has a strong basis since $\mathcal{B}(G_3) = \{A_1, A_3, A_4, A_5, A_6, T, H, J\}$.
\end{proof}

\subsection{Mutation of a larva}
\setcounter{equation}{0}

Let $D$ be a larva with vertices labeled as in Figure \ref{fig:evolution}. As previously described in Section \ref{subsec:evolution}, a pupa $P$ can be obtained from $D$ by evolution. Another way of describing how to obtain a pupa $P$ from $D$ is as follows. Assume that $k \geq 6$ and let $i$ be an odd number such that $5 \leq i \leq k-1$. A pupa $P$ can be obtained from $D$ by making $c_2$ complete to $\{c_i, c_{i+1}, \dots, c_k\}$, i.e., by adding an even number of chords from $c_2$ to its far side of the even cycle in $D$. In this case, we say that $c_2$ is \emph{mutating towards $c_i$}. Notice that $H = c_2 \dd c_3 \dd \dots \dd c_i \dd c_2$ is an even hole after $c_2$ mutates towards $c_i$ (and the new graph is a pupa). We could also add an even number of chords from $c_1$ to its far side of the even cycle, that is, for some even number $j$ with $4 \leq j \leq k-2$, we make $c_1$ complete to $\{c_3, c_4, \dots, c_j\}$, in which case we say $c_1$ is mutating towards $c_j$. However, at most one of $c_1, c_2$ is allowed to mutate.

Let $D$ be a larva (with vertex labels as in Figure \ref{fig:evolution}) in a graph $G_0$. For $j = 1,2,\dots, n$, let $G_j$ be obtained from $G_{j-1}$ by mutating $c_{i_{j-1}}$ towards $c_{i_j}$ where $i_0 =1$ and $4 \leq i_1, i_2, \dots i_n \leq k$. This operation is called \emph{mutation} and we call the graph $G_n$ a \emph{mutated} $G_0$. A mutated pupa and a mutated butterfly are shown in Figure \ref{fig:mutated_butterflies}, where the mutated pupa is obtained by mutating $c_1$ towards $c_4$, then mutating $c_4$ towards $c_{k-1}$, and then mutating $c_{k-1}$ towards $c_6$.

\vspace{-0.3cm}

\begin{figure}[h]
\begin{center}
\begin{tikzpicture}[scale=0.22]

\node[inner sep=2.5pt, fill=black, circle] at (0, 0)(v1){}; 
\node[inner sep=2.5pt, fill=black, circle] at (9, 0)(v2){}; 
\node[inner sep=2.5pt, fill=black, circle] at (2, 2)(v3){}; 
\node[inner sep=2.5pt, fill=black, circle] at (7, 2)(v4){};
\node[label=above:{$\scriptstyle{v}$}, inner sep=2.5pt, fill=black, circle] at (4.5, 6)(v5){}; 

\node at (0.8,2.5) {$\scriptstyle{c_1}$};
\node at (8.2,2.5) {$\scriptstyle{c_2}$};
\node at (9.2,1.5) {$\scriptstyle{c_3}$};
\node at (10.2,0.2) {$\scriptstyle{c_4}$};
\node at (10.2,-1.3) {$\scriptstyle{c_5}$};
\node at (10.2,-2.8) {$\scriptstyle{c_6}$};
\node at (-2,-2.8) {$\scriptstyle{c_{k-3}}$};
\node at (-2,-1.3) {$\scriptstyle{c_{k-2}}$};
\node at (-2,0.2) {$\scriptstyle{c_{k-1}}$};
\node at (-0.2,1.5) {$\scriptstyle{c_k}$};

\node[inner sep=2.5pt, fill=black, circle] at (3.75, 4.8)(v6){};
\node[inner sep=2.5pt, fill=black, circle] at (2.75, 3.2)(v7){}; 

\node[inner sep=2.5pt, fill=black, circle] at (8, 1)(v8){}; 

\node[inner sep=2.5pt, fill=black, circle] at (0, -3)(v9){};
\node[inner sep=2.5pt, fill=black, circle] at (9, -3)(v10){};

\node[inner sep=2.5pt, fill=black, circle] at (1, 1)(v11){}; 
\node[inner sep=2.5pt, fill=black, circle] at (0, -1.5)(v12){};
\node[inner sep=2.5pt, fill=black, circle] at (9, -1.5)(v13){};

\draw[black, dotted, thick] (v9)  .. controls +(0,-6.5) and +(0,-6.5) .. (v10);
\draw[black, thick] (v1) -- (v3);
\draw[black, thick] (v2) -- (v4);
\draw[black, thick] (v3) -- (v4);
\draw[black, thick] (v4) -- (v5);
\draw[black, thick] (v4) -- (v6);
\draw[black, thick] (v4) -- (v7);
\draw[black, thick] (v5) -- (v6);
\draw[black, thick] (v3) -- (v7);
\draw[black, dotted, thick] (v6) -- (v7);

\draw[black, thick] (v4) -- (4.8, 3.5);
\draw[black, thick] (v4) -- (4.8, 3);

\draw[black, thick] (v3) -- (v2);
\draw[black, thick] (v3) -- (v8);

\draw[black, thick] (v1) -- (v9);
\draw[black, thick] (v2) -- (v10);

\draw[black, thick] (v2) -- (v1);
\draw[black, thick] (v2) -- (v11);

\draw[black, thick] (v1) -- (v10);
\draw[black, thick] (v1) -- (v13);

\node at (4.5,-4.5) {\small{even}};

\end{tikzpicture}
\hspace{0.5cm}
\begin{tikzpicture}[scale=0.26]

\node[inner sep=2.5pt, fill=black, circle] at (2, 2)(v3){}; 
\node[inner sep=2.5pt, fill=black, circle] at (6, 2)(v4){};
\node[inner sep=2.5pt, fill=black, circle] at (-1, 2)(v6){}; 
\node[inner sep=2.5pt, fill=black, circle] at (9, 2)(v7){};

\node[inner sep=2.5pt, fill=black, circle] at (4, 5)(v5){}; 

\node[inner sep=2.5pt, fill=black, circle] at (0.5, 2)(v12){}; 

\node[inner sep=2.5pt, fill=black, circle] at (2, 7.5)(v8){};
\node[inner sep=2.5pt, fill=black, circle] at (6, 7.5)(v9){}; 

\node[inner sep=2.5pt, fill=black, circle] at (3, 6.25)(v10){};
\node[inner sep=2.5pt, fill=black, circle] at (5, 6.25)(v11){};

\node[inner sep=2.5pt, fill=white, circle] at (5, -2)(v21){};

\draw[black, dotted, thick] (v6)  .. controls +(-5,1) and +(-5,3) .. (v8);
\draw[black, dotted, thick] (v7)  .. controls +(5,1) and +(5,3) .. (v9);
\draw[black, thick] (v3) -- (v4);
\draw[black, thick] (v3) -- (v5);
\draw[black, thick] (v4) -- (v5);
\draw[black, thick] (v6) -- (v3);
\draw[black, thick] (v7) -- (v4);
\draw[black, thick] (v8) -- (v5);
\draw[black, thick] (v9) -- (v5);

\draw[black, thick] (v3) -- (v8);
\draw[black, thick] (v3) -- (v10);

\draw[black, thick] (v4) -- (v9);
\draw[black, thick] (v4) -- (v11);

\draw[black, thick] (v8) -- (v6);
\draw[black, thick] (v8) -- (v12);

\draw[black, thick] (v5) -- (3.5, 3);
\draw[black, thick] (v5) -- (4, 3);
\draw[black, thick] (v5) -- (4.5, 3);

\node at (8.5,5) {\small{even}};
\node at (-1.3,5.3) {\small{even}};

\node at (4,0.6) {\small{odd}};

\end{tikzpicture}
\end{center}
\vspace{-0.9cm}
\caption{A mutated pupa and a mutated butterfly}
\label{fig:mutated_butterflies}
\end{figure}
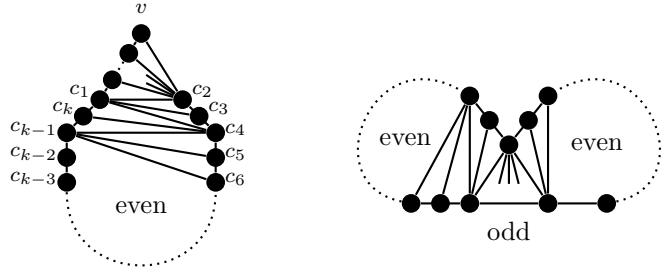

It is immediate to show that mutated pupas and butterflies are strongly perfect. Moreover, Observation \ref{obs:desirable_undesirable} holds also for mutated pupas and butterflies, i.e., the head of a mutated pupa is undesirable, and the head of a mutated butterfly is desirable. This suggests that we can obtain new minimal non-strongly-perfect graphs by connecting mutated pupas and butterflies via an even or odd paths, as before. More generally, we invite the reader to check that the following holds. We omit the proof as it is similar to the proofs of Proposition \ref{prop:new_desirable} and \ref{prop:new_evolved_ones}.

\begin{proposition}
Let $G_1, G_2, \dots, G_6$ be the graphs shown in Figure \ref{fig:new_mnsp_graphs} and Figure \ref{fig:new_MNSP_evolve}. For $i=1,\dots,6$, let $G_i'$ be a mutated $G_i$. Then, $G_i'$ is a minimal non-strongly-perfect graph.
\label{prop:new_mutated_ones}
\end{proposition}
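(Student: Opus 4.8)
The plan is to establish, for each mutated graph $G_i'$, the two hypotheses of the strong-basis criterion stated just after Lemma~\ref{lem:no_simplicial_in_minimal}: that $G_i'$ admits no strong stable set, and that every member of $\mathcal{B}(G_i')$ is strongly perfect. Granting both, that criterion immediately yields that $G_i'$ is minimal non-strongly-perfect. Throughout I would use freely the two facts recorded just before the statement --- that mutated pupas and mutated butterflies are strongly perfect, and that the head of a mutated pupa (resp.\ butterfly) remains undesirable (resp.\ desirable) --- so that the mutated building blocks play exactly the roles their unmutated versions play in Propositions~\ref{prop:new_desirable} and~\ref{prop:new_evolved_ones}.

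For the first hypothesis I would transcribe the ``no strong stable set'' arguments of Propositions~\ref{prop:new_desirable} and~\ref{prop:new_evolved_ones}, checking that each ingredient survives mutation. Mutation adds edges only among the vertices of an even hole, and in particular adds no edge incident to a head; hence every head remains a clique cutset of the same type (a head alone, as in Figure~\ref{fig:new_mnsp_graphs}, or a head together with its hub, as the cutsets $\{v_1,c_2\}$, $\{v_2,d_1\}$ for $G_2$), and Lemma~\ref{lem:clique_cutset_sss} together with the preserved forcing of the head excludes each head from any hypothetical strong stable set exactly as before. The odd-path propagation is likewise untouched: a hub such as $c_2$ stays complete to its path $P$, so the triangles $\{p_j,p_{j+1},c_2\}$ remain maximal cliques and the parity walk along the odd path $P$ (or $Q$, or $R$) produces the same contradiction.

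The only ingredient requiring genuine re-proof is the even-hole forcing \eqref{eq:even_cycles_problem} used for $G_1'$ and $G_3'$, namely that any putative strong stable set $S$ meets each side pair $\{c_1,c_2\}$ (and $\{d_1,d_2\}$, $\{e_1,e_2\}$) in exactly one vertex. Here the key point is robustness: by hypothesis at most one of $c_1,c_2$ mutates, and mutating, say, $c_1$ only adds chords from $c_1$ to $c_3,\dots,c_j$, which converts some consecutive-pair maximal cliques $\{c_\ell,c_{\ell+1}\}$ of the hole into triangles $\{c_1,c_\ell,c_{\ell+1}\}$ but alters neither the alternation they impose nor the even length of the hole. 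Running the same walk around the hole therefore still forces both $c_1\notin S$ and $c_k\notin S$ against the maximal clique $\{c_k,c_1\}$, a contradiction, so exactly one of $c_1,c_2$ lies in $S$. The downstream chaining of the three pair-constraints through the shared vertices $e_1=c_2$ and $e_2=d_1$ is purely combinatorial and unaffected, which completes the first hypothesis.

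For the second hypothesis I would recompute $\mathcal{B}(G_i')$ in the style of the basis computations in Propositions~\ref{prop:new_desirable} and~\ref{prop:new_evolved_ones}, for instance starting from $\mathcal{B}(G_3)=\{A_1,A_3,A_4,A_5,A_6,T,H,J\}$ and tracking how mutation changes it. Every proper induced subgraph of $G_i'$ with no simplicial vertex should again be a mutated pupa, a mutated butterfly, a mutated analogue of some $A_j$, or a mutated analogue of one of the intermediate graphs $T,F,H,J$; each of these is strongly perfect because one can still exhibit a strong stable set in it while its basis bottoms out at $A_1$ and $2A_1$. I expect the main obstacle to lie precisely in this bookkeeping: mutation creates new triangles, hence new maximal cliques, and changes which induced subgraphs are simplicial-free, so $\mathcal{B}(G_i')$ genuinely differs from $\mathcal{B}(G_i)$ and must be re-derived case by case, with a check that no member escapes the known strongly perfect families (in particular that no odd hole or antihole is introduced). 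No conceptual device beyond the robustness of the parity walk noted above is required, which is why the verification can safely be left to the reader.
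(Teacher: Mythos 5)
Your proposal is correct and follows essentially the same route as the paper, which in fact omits this proof entirely on the grounds that it is ``similar to the proofs of Proposition~\ref{prop:new_desirable} and~\ref{prop:new_evolved_ones}'': you adapt exactly those two arguments, checking that the clique-cutset/forced-head step, the odd-path parity walk, and the even-hole forcing of \eqref{eq:even_cycles_problem} all survive mutation, and that the basis can be recomputed from mutated versions of the same building blocks. Your observation that mutation merely converts consecutive-pair maximal cliques of the hole into triangles through the mutating vertex, without disturbing the alternation, is precisely the robustness the paper relies on when it leaves the verification to the reader.
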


Proposition \ref{prop:new_mutated_ones} provides several new minimal non-strongly-perfect graphs. Other new minimal non-strongly-perfect graphs can be obtained by considering different possible mutations of Graph IV. Two of them are shown in Figure \ref{fig:new_mnsp_chord}. We again omit the proof and leave it to the reader.

\begin{proposition}
The graphs in Figure \ref{fig:new_mnsp_chord} are minimal non-strongly-perfect.
\end{proposition}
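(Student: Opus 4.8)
The plan is to reuse, without change, the two-step template of the proofs of Propositions~\ref{prop:new_desirable} and~\ref{prop:new_evolved_ones}. For each graph $G$ shown in Figure~\ref{fig:new_mnsp_chord} I would first show that $G$ has no strong stable set, and then show that $G$ has a strong basis; by the discussion following Lemma~\ref{lem:no_simplicial_in_minimal}, these two facts together give that $G$ is minimal non-strongly-perfect. The guiding picture is that each such $G$ is a mutation of Graph~IV, so it retains a central triangle $\{x,y,z\}$ together with three even holes, each of which meets the triangle in exactly one edge.

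For the first step I would argue exactly as in~\eqref{eq:even_cycles_problem}. Suppose for a contradiction that $G$ has a strong stable set $S$. Applying the argument of~\eqref{eq:even_cycles_problem} to each of the three even holes of $G$ (whose non-distinguished edges are maximal cliques, a property the chords added by a mutation preserve, since a mutation only shortens a hole into a new, again even, hole), I would deduce that $S$ contains exactly one vertex from each distinguished pair. In the unmutated core these three distinguished pairs are precisely the three edges $xy$, $yz$, $zx$ of the triangle, and since $\{x,y,z\}$ is a clique, $S$ meets it in at most one vertex; a three-line case check (whichever of $x,y,z$ lies in $S$, one triangle edge is left with neither endpoint in $S$, and if none lies in $S$ every pair fails) yields the contradiction. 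For the two mutated graphs I would combine the per-hole constraints coming from~\eqref{eq:even_cycles_problem} with the triangle adjacencies and the parities of the connecting paths, in the same manner as the $G_1$ and $G_3$ analyses, to reach the analogous contradiction, thereby showing that $G$ has no strong stable set.

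For the second step I would compute $\mathcal{B}(G)$ explicitly. Every proper induced subgraph of $G$ with no simplicial vertex must break at least one of the three even holes, and the maximal such subgraphs are (mutated) larvas, pupas, and butterflies, together with intermediate graphs obtained by gluing two of the three branches along the triangle, in the spirit of the graphs $J$, $F$, $H$ appearing in the proof of Proposition~\ref{prop:new_evolved_ones}. For each member of $\mathcal{B}(G)$ I would exhibit an explicit strong stable set (taking alternate vertices around each hole and along each path, as in the earlier proofs) and verify recursively that it again has a strong basis, invoking that $A_1,\dots,A_6$ are strongly perfect and that mutated pupas and butterflies are strongly perfect, as established above. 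Since every graph in $\mathcal{B}(G)$ is then strongly perfect, $G$ has a strong basis.

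The main obstacle is the bookkeeping in the second step. The chords introduced by a mutation change which vertices are simplicial, and hence reshape $\mathcal{B}(G)$; the delicate point is to enumerate $\mathcal{B}(G)$ correctly and, in particular, to confirm that no simplicial-free subgraph created by the added chords fails to be strongly perfect, and that each intermediate two-branch graph carries an explicit strong stable set. Once $\mathcal{B}(G)$ is pinned down, each individual verification is routine and proceeds exactly as in Proposition~\ref{prop:new_evolved_ones}.
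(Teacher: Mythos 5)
The paper gives no proof of this proposition (``we again omit the proof and leave it to the reader''), so the only benchmark is the two-step template of Propositions \ref{prop:new_desirable} and \ref{prop:new_evolved_ones}, which is indeed the approach you follow and the one the authors intend. However, your concrete argument for the first step rests on a guiding picture that is false for both graphs actually drawn in Figure \ref{fig:new_mnsp_chord}: after the mutations, the three even holes no longer each contain an edge of the central triangle. In each larva of Graph IV, one endpoint $y$ of the shared triangle edge $xy$ is made complete to the two vertices $w_1,w_2$ that follow $x$ along that hole, and the resulting shortened even hole passes through $y$ and $w_2$ but omits $x$ and $w_1$ entirely (the original hole is destroyed by the chords). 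Consequently the ``distinguished pair'' that the argument of \eqref{eq:even_cycles_problem} delivers for that hole is $\{y,w_2\}$, not the triangle edge $\{x,y\}$, and the three-line case check on the triangle alone closes no case. The repair is available but needs the new maximal cliques created by the chords: the triangle $\{x,y,z\}$ is still a maximal clique, so a strong stable set $S$ meets it in exactly one vertex; if $z\in S$ (the head of the larva just described), then $x,y\notin S$, the maximal clique $\{x,y,w_1\}$ forces $w_1\in S$, and the shortened hole forces $w_2\in S$ via \eqref{eq:even_cycles_problem}, contradicting the edge $w_1w_2$. Since in both graphs of Figure \ref{fig:new_mnsp_chord} each of the three larvas is mutated by one endpoint of its triangle edge, this argument applies to whichever triangle vertex lies in $S$, and $S\cap\{x,y,z\}=\emptyset$ contradicts the maximality of the triangle. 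You do gesture at ``combining the per-hole constraints with the triangle adjacencies,'' which is the right instinct, but as written your step 1 would not go through without this correction.

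Your second step is only a promise: you describe what $\mathcal{B}(G)$ ought to contain and correctly flag that the added chords change which vertices are simplicial, but you neither enumerate the basis nor exhibit the required strong stable sets, and this is where essentially all of the remaining work lies. That said, the paper itself treats this as routine bookkeeping of exactly the kind carried out for $G_2$ and $G_3$ in Proposition \ref{prop:new_evolved_ones} (mutated larvas, pupas and butterflies, plus the two-branch intermediate graphs), so deferring it is defensible provided the step-1 correction above is incorporated.
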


\vspace{-0.5cm}

\begin{figure}[h]
\begin{center}
\begin{tikzpicture}[scale=0.23]

\node[inner sep=2.5pt, fill=black, circle] at (0.5, 0)(v1){}; 
\node[inner sep=2.5pt, fill=black, circle] at (7.5, 0)(v2){}; 

\node[inner sep=2.5pt, fill=black, circle] at (2, 2)(v3){}; 
\node[inner sep=2.5pt, fill=black, circle] at (6, 2)(v4){};
\node[inner sep=2.5pt, fill=black, circle] at (-1, 2)(v6){}; 
\node[inner sep=2.5pt, fill=black, circle] at (9, 2)(v7){};

\node[inner sep=2.5pt, fill=black, circle] at (4, 5)(v5){}; 

\node[inner sep=2.5pt, fill=black, circle] at (2, 7.5)(v8){};
\node[inner sep=2.5pt, fill=black, circle] at (6, 7.5)(v9){}; 

\node[inner sep=2.5pt, fill=black, circle] at (3, 6.25)(v10){};
\node[inner sep=2.5pt, fill=black, circle] at (7.5, 2)(v11){};
\node[inner sep=2.5pt, fill=black, circle] at (1.25, 1)(v12){};

\draw[black, dotted, thick] (v1)  .. controls +(0,-5) and +(0,-5) .. (v2);
\draw[black, dotted, thick] (v6)  .. controls +(-5,1) and +(-5,3) .. (v8);
\draw[black, dotted, thick] (v7)  .. controls +(5,1) and +(5,3) .. (v9);
\draw[black, thick] (v1) -- (v3);
\draw[black, thick] (v2) -- (v4);
\draw[black, thick] (v3) -- (v4);
\draw[black, thick] (v3) -- (v5);
\draw[black, thick] (v4) -- (v5);
\draw[black, thick] (v6) -- (v3);
\draw[black, thick] (v7) -- (v4);
\draw[black, thick] (v8) -- (v5);
\draw[black, thick] (v9) -- (v5);

\draw[black, thick] (v3) -- (v8);
\draw[black, thick] (v3) -- (v10);

\draw[black, thick] (v4) -- (v1);
\draw[black, thick] (v4) -- (v12);

\draw[black, thick] (v5) -- (v7);
\draw[black, thick] (v5) -- (v11);

\node at (4.2,-1) {\small{even}};
\node at (8.5,5) {\small{even}};
\node at (-0.5,5) {\small{even}};

\end{tikzpicture}
\hspace{-0.4cm}
\begin{tikzpicture}[scale=0.23]

\node[inner sep=2.5pt, fill=black, circle] at (0.5, 0)(v1){}; 
\node[inner sep=2.5pt, fill=black, circle] at (7.5, 0)(v2){}; 

\node[inner sep=2.5pt, fill=black, circle] at (2, 2)(v3){}; 
\node[inner sep=2.5pt, fill=black, circle] at (6, 2)(v4){};
\node[inner sep=2.5pt, fill=black, circle] at (-1, 2)(v6){}; 
\node[inner sep=2.5pt, fill=black, circle] at (9, 2)(v7){};

\node[inner sep=2.5pt, fill=black, circle] at (4, 5)(v5){}; 

\node[inner sep=2.5pt, fill=black, circle] at (2, 7.5)(v8){};
\node[inner sep=2.5pt, fill=black, circle] at (6, 7.5)(v9){}; 

\node[inner sep=2.5pt, fill=black, circle] at (3, 6.25)(v10){};
\node[inner sep=2.5pt, fill=black, circle] at (5, 6.25)(v11){};
\node[inner sep=2.5pt, fill=black, circle] at (1.25, 1)(v12){};

\draw[black, dotted, thick] (v1)  .. controls +(0,-5) and +(0,-5) .. (v2);
\draw[black, dotted, thick] (v6)  .. controls +(-5,1) and +(-5,3) .. (v8);
\draw[black, dotted, thick] (v7)  .. controls +(5,1) and +(5,3) .. (v9);
\draw[black, thick] (v1) -- (v3);
\draw[black, thick] (v2) -- (v4);
\draw[black, thick] (v3) -- (v4);
\draw[black, thick] (v3) -- (v5);
\draw[black, thick] (v4) -- (v5);
\draw[black, thick] (v6) -- (v3);
\draw[black, thick] (v7) -- (v4);
\draw[black, thick] (v8) -- (v5);
\draw[black, thick] (v9) -- (v5);

\draw[black, thick] (v3) -- (v8);
\draw[black, thick] (v3) -- (v10);

\draw[black, thick] (v4) -- (v1);
\draw[black, thick] (v4) -- (v12);

\draw[black, thick] (v4) -- (v9);
\draw[black, thick] (v4) -- (v11);

\node at (4.2,-1) {\small{even}};
\node at (8.5,5) {\small{even}};
\node at (-0.5,5) {\small{even}};

\end{tikzpicture}
\end{center}
\vspace{-0.75cm}
\caption{New minimal non-strongly-perfect graphs}
\label{fig:new_mnsp_chord}
\end{figure}
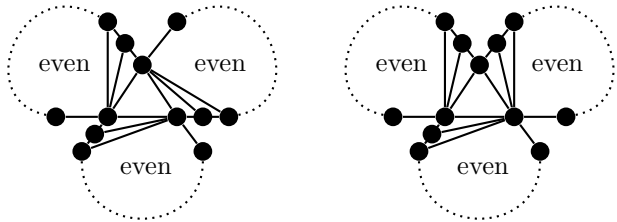

While obtaining a complete list of minimal non-strongly-perfect graphs appears to be out of reach, one might conjecture that the characterization of outerplanar strongly perfect graphs can be obtained through the minimal examples given in this paper.


\end{document}